\theoremstyle{plain}
\newtheorem{Lemma}{Lemma}
\newtheorem{Theorem}[Lemma]{Theorem}
\newtheorem{Proposition}[Lemma]{Proposition}
\newtheorem{Conjecture}[Lemma]{Conjecture}
\newcommand*{\rom}[1]{\expandafter\@slowromancap\romannumeral #1@}
\title{Small solutions of generic ternary quadratic congruences to general moduli}
\subjclass[2010]{11D79,11E04,11E25,11L40,11T24.}
\keywords{quadratic congruences, small solutions, quadratic forms, short character sums, finite fields}
\author{Stephan Baier}
\address{Stephan Baier,
Ramakrishna Mission Vivekananda Educational and Research Institute, Department of Mathematics, G. T. Road, PO Belur Math, Howrah, West Bengal 711202, India}
\email{stephanbaier2017@gmail.com}
\author{Aishik Chattopadhyay}
\address{Aishik Chattopadhyay,
Ramakrishna Mission Vivekananda Educational and Research Institute, Department of Mathematics, G. T. Road, PO Belur Math, Howrah, West Bengal 711202, India}
\email{aishik.ch@gmail.com}
\begin{document}
\maketitle
\begin{abstract} We study small non-trivial solutions of quadratic congruences of the form $x_1^2+\alpha_2x_2^2+\alpha_3x_3^2\equiv 0 \bmod{q}$, with $q$ being an odd natural number, in an average sense. This extends previous work of the authors in which they considered the case of prime power moduli $q$. Above, $\alpha_2$ is arbitrary but fixed and $\alpha_3$ is variable, and we assume that $(\alpha_2\alpha_3,q)=1$. We show that for all $\alpha_3$ modulo $q$ which are coprime to $q$ except for a small number of $\alpha_3$'s, an asymptotic formula for the number of solutions $(x_1,x_2,x_3)$ to the congruence $x_1^2+\alpha_2x_2^2+\alpha_3x_3^2\equiv 0 \bmod{q}$ with $\max\{|x_1|,|x_2|,|x_3|\}\le N$ and $(x_3,q)=1$ holds if $N\ge q^{11/24+\varepsilon}$ and $q$ is large enough. It is of significance that we break the barrier 1/2 in the above exponent. Key tools in our work are Burgess's estimate for character sums over short intervals and Heath-Brown's estimate for character sums with binary quadratic forms over small regions whose proofs depend on the Riemann hypothesis for curves over finite fields. We also formulate a refined conjecture about the size of the smallest solution of a ternary quadratic congruence, using information about the Diophantine properties of its coefficients. 
\end{abstract}

\tableofcontents

\section{Introduction and main result}
Throughout this paper, assume that $\varepsilon$ is a fixed but arbitrarily small positive number. All implied $O$-constants will be allowed to depend on $\varepsilon$. 

The study of small solutions of quadratic congruences $Q(x_1,...,x_n)\equiv 0 \bmod{q}$,
$Q$ being an integral quadratic form, has attracted a lot of attention. In this paper, we study small solutions of generic ternary diagonal forms. We will justify the term "generic" below. If $(x_1,x_2,x_3)\in \mathbb{Z}^3$ is a solution to the above congruence, we call the quantity
$\max\{|x_1|,|x_2|,|x_3|\}$ "height" of this solution.   

If $q$ is odd and squarefree, Heath-Brown \cite[Theorem 2]{HB} proved that for any integral ternary quadratic form $Q(x_1,x_2,x_3)$ with determinant coprime to $q$, there exists a non-trivial solution $(x_1,x_2,x_3)\in \mathbb{Z}^3$ to the congruence
\begin{equation} \label{congruence}
Q(x_1,x_2,x_3)\equiv 0\bmod{q}
\end{equation}
of height $\ll q^{5/8+\varepsilon}$. (Here "non-trivial" means that $(x_1,x_2,x_3)\not=(0,0,0)$.) He conjectured that there should be a non-trivial solution of height $\ll q^{1/2+\varepsilon}$. By a result of Cochrane (see \cite{Coc}), this is true if $Q$ has {\it fixed} coefficients and $q$ tends to infinity, where the implied constant may depend on the form.  (In fact, Cochrane established this for an exponent of $1/2$ in place of $1/2+\varepsilon$ and all moduli $q$.)  Throughout the sequel, we keep the condition $(\det Q,q)=1$. This condition is important as Heath-Brown gave examples of forms with $(\det Q,q)>1$ for which there is no non-trivial solution of height $\ll q^{2/3-\varepsilon}$.

As pointed out in \cite{HB}, it is easy to extend Heath-Brown's above result from odd and squarefree to all odd moduli $q$, as the following argument shows. Write $q=q_0q_1^2$, where $q_0$ is squarefree. Then by the above result, there is a non-trivial solution to the congruence 
$$
Q(x_1,x_2,x_3)\equiv 0\bmod{q_0}
$$
of height $\ll q_0^{5/8+\varepsilon}$. This extends to a solution $(x_1q_1,x_2q_1,x_3q_1)$ to the congruence \eqref{congruence} of height $\ll q_0^{5/8+\varepsilon}q_1\ll q^{5/8+\varepsilon}$.
An extreme case is that of a large power of an odd prime $p$: By the above argument, we see that for $q=p^n$, there is a non-trivial solution to \eqref{congruence} of height
$\ll_p q^{1/2+\varepsilon}$.
However, if we put suitable restrictions on the variables, such as coprimality to the modulus $q$, then the above simple argument is no longer applicable. In \cite{snu}, Haldar and the first-named author proved for diagonal ternary forms and odd prime power moduli $q=p^n$ that there exists a solution to \eqref{congruence} satisfying $(x_1x_2x_3,q)=1$ of height
$\ll_p q^{11/18+\varepsilon}$.  

It seems reasonable to conjecture that \eqref{congruence} has always a solution satisfying the coprimality condition $(x_1x_2x_3,q)=1$ of height
$\ll q^{1/2+\varepsilon}$. This has been established in \cite[Theorem 1]{snu} for {\it fixed} coefficients $\alpha_i$ and $q$ tending to infinity over the powers of a fixed odd prime $p>5$. (In fact, the authors proved an asymptotic formula for the number of solutions of height $N\ge q^{1/2+\varepsilon}$.) It does not diminish the interest of this conjecture if we demand coprimality to $q$ of just one variable instead of all three - the above extension argument from square-free to arbitrary odd moduli still breaks down in this case. Indeed, in our main result below we will just assume that $(x_3,q)=1$. This will facilitate our calculations. 

We are not aware of any literature in which a significant improvement of the above conjecture has been attempted. Indeed, it is true that in general, the exponent $1/2$ in this conjecture cannot be reduced. For example, the congruence
$$
x_1^2+x_2^2+x_3^2\equiv 0 \bmod{q}
$$ 
has no non-trivial solution of height less than $\sqrt{q/3}$. However, for the case of odd prime power moduli, we proved in \cite{GTQC} that, in a sense, almost all diagonal ternary forms admit a solution satisfying $(x_3,q)=1$ of significantly smaller height $\ll q^{11/24+\varepsilon}$ ($\ll q^{1/3+\varepsilon}$ under the Lindel\"of hypothesis for Dirichlet $L$-functions).  Casually speaking, such a solution exists for {\it generic} ternary diagonal forms.  In this paper, we extend this result to all odd moduli $q$. Precisely, we prove the following.

\begin{Theorem} \label{mainthm}
Let $q\in \mathbb{N}$ be odd and $\alpha_1,\alpha_2\in \mathbb{Z}$ such that $(\alpha_1\alpha_2,q)=1$. Then for all 
$$\alpha_3\in \Phi(q):=\{s\in \mathbb{Z}: 1\le s\le q, \ (s,q)=1\}
$$ 
with at most $o(\varphi(q))$ exceptions, the congruence
$$
\alpha_1x_1^2+\alpha_2x_2^2+\alpha_3x_3^2\equiv 0 \bmod{q}
$$ 
has a solution $(x_1,x_2,x_3)\in \mathbb{Z}^3$ satisfying $(x_3,q)=1$ of height $\ll q^{11/24+\varepsilon}$. More precisely, if $q^{11/24+\varepsilon}\le N\le q$, then for all  $\alpha_3\in \Phi(q)$ with at most $o(\varphi(q))$ exceptions, the number of solutions of height less or equal $N$ satisfies the
asymptotic formula
\begin{equation} \label{asympform}
\sum\limits_{\substack{|x_1|,|x_2|,|x_3|\le N\\ (x_3,q)=1\\ 
x_1^2+\alpha_2x_2^2+\alpha_3x_3^2\equiv 0 \bmod{q}}} 1=C_q\cdot \frac{(2N)^3}{q}\cdot \left(1+o(1)\right),
\end{equation}
where
$$
C_q:=\prod\limits_{p|q}\left(1-\frac{1}{p}\right)\cdot \prod\limits_{p|q}\left(1-\frac{1}{p}\cdot \left(\frac{-\alpha_2}{p}\right)\right).
$$
Here $\left(\frac{\cdot}{p}\right)$ denotes the Legendre symbol. 
Moreover, under the Lindel\"of hypothesis for Dirichlet $L$-functions, the exponent $11/24$ above can be replaced by  $1/3$. 
\end{Theorem}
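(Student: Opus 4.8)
The plan is to extract from \eqref{asympform} an asymptotic formula for $N(\alpha_3)$, its left‑hand side, valid for all $\alpha_3\in\Phi(q)$ outside a set of size $o(\varphi(q))$, by a mean‑square estimate over $\alpha_3$. Write $M:=C_q(2N)^3/q$ for the predicted main term. I would first detect the congruence $x_1^2+\alpha_2x_2^2\equiv-\alpha_3x_3^2\pmod q$ by orthogonality of Dirichlet characters modulo $q$ (the right side being a unit since $(\alpha_3x_3,q)=1$); grouping by $x_3$ and using $\overline\psi(x_3^2)=\overline{\psi^2}(x_3)$,
\[
N(\alpha_3)=\frac{1}{\varphi(q)}\sum_{\psi\bmod q}\overline\psi(-\alpha_3)\,A_\psi\,B_{\overline{\psi^2}},\qquad A_\psi:=\sum_{|x_1|,|x_2|\le N}\psi\bigl(x_1^2+\alpha_2x_2^2\bigr),\quad B_\chi:=\sum_{|x_3|\le N}\chi(x_3).
\]
The term $\psi=\psi_0$ equals $\varphi(q)^{-1}A_{\psi_0}B_{\psi_0}$ with $A_{\psi_0}=\#\{|x_1|,|x_2|\le N:(x_1^2+\alpha_2x_2^2,q)=1\}$ and $B_{\psi_0}=\#\{|x_3|\le N:(x_3,q)=1\}$; a lattice‑point count over residues modulo $q$ (the density of $(x_1,x_2)$ with $p\nmid x_1^2+\alpha_2x_2^2$ being $(1-\tfrac1p)(1-\tfrac1p(\tfrac{-\alpha_2}{p}))$, the $2^{\omega(q)}=q^{o(1)}$ prime divisors of $q$ and incomplete periods contributing only a factor $1+o(1)$ once $N\to\infty$) gives $\varphi(q)^{-1}A_{\psi_0}B_{\psi_0}=M(1+o(1))$.

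Next I would treat the $2^{\omega(q)}-1=q^{o(1)}$ non‑principal characters $\psi$ with $\psi^2=\psi_0$. For these $B_{\overline{\psi^2}}=B_{\psi_0}\asymp N\varphi(q)/q$ is of maximal size, but $A_\psi$ is a character sum over the binary quadratic form $x_1^2+\alpha_2x_2^2$; by \emph{Heath‑Brown's estimate for such sums over small regions} (after reducing $\psi$ to its primitive inducing character) one has $A_\psi\ll(2N)^{2-\delta}q^{\varepsilon}$ for some fixed $\delta>0$, since $N\ge q^{11/24+\varepsilon}$ is well above the admissible threshold, which is of order $q^{1/4+\varepsilon}$. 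Hence these $\psi$ contribute $\ll q^{o(1)}\cdot\varphi(q)^{-1}(2N)^{2-\delta}q^{\varepsilon}\cdot N\varphi(q)/q=o(M)$ to $N(\alpha_3)$, \emph{uniformly} in $\alpha_3$, so they produce no exceptional $\alpha_3$. (Under the Lindel\"of hypothesis this step still uses only Heath‑Brown's bound, whose threshold remains below $q^{1/3}$.)

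There remains $\mathcal E(\alpha_3):=\varphi(q)^{-1}\sum_{\psi^2\neq\psi_0}\overline\psi(-\alpha_3)A_\psi B_{\overline{\psi^2}}$, which for an individual $\alpha_3$ need not be small; so I would estimate its mean square. Expanding and using orthogonality of $\overline{\psi_1}\psi_2$ over $\Phi(q)$,
\[
\sum_{\alpha_3\in\Phi(q)}|\mathcal E(\alpha_3)|^2=\frac{1}{\varphi(q)}\sum_{\substack{\psi\bmod q\\ \psi^2\neq\psi_0}}|A_\psi|^2\,|B_{\psi^2}|^2 .
\]
Now $B_{\psi^2}$ is a short character sum to the \emph{non‑principal} modulus‑$q$ character $\psi^2$, so \emph{Burgess's bound} gives $|B_{\psi^2}|\ll N^{1-1/r}q^{(r+1)/(4r^2)+\varepsilon}$ uniformly in $\psi$, for any fixed $r$; and, since $|x_1^2+\alpha_2x_2^2-x_1'^2-\alpha_2x_2'^2|\ll N^2$ and a fixed integer has $\ll N^{\varepsilon}$ representations by $x_1^2+\alpha_2x_2^2$ inside $[-N,N]^2$,
\[
\frac{1}{\varphi(q)}\sum_{\psi\bmod q}|A_\psi|^2=\#\bigl\{(x_1,x_2,x_1',x_2')\in[-N,N]^4:\ x_1^2+\alpha_2x_2^2\equiv x_1'^2+\alpha_2x_2'^2\ (\mathrm{mod}\ q),\ (x_1^2+\alpha_2x_2^2,q)=1\bigr\}\ \ll\ \Bigl(\tfrac{N^4}{q}+N^2\Bigr)N^{\varepsilon}.
\]
Combining, $\sum_{\alpha_3\in\Phi(q)}|\mathcal E(\alpha_3)|^2\ll N^{2-2/r}q^{(r+1)/(2r^2)+\varepsilon}(N^4/q+N^2)$. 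The $N^4/q$ term is $o(\varphi(q)M^2)$ for every fixed $r$ once $N\to\infty$, while the $N^2$ term is $o(\varphi(q)M^2)$ precisely when $N\gg q^{\theta_r+\varepsilon}$ with $\theta_r=\frac{2r^2+r+1}{4r(r+1)}$; since $\min_{r\ge1}\theta_r=\theta_3=\theta_2=\tfrac{11}{24}$, the choice $r=3$ (for which Burgess's bound is unconditional in $q$) yields $\sum_{\alpha_3}|\mathcal E(\alpha_3)|^2=o(\varphi(q)M^2)$ whenever $N\ge q^{11/24+\varepsilon}$. By Chebyshev's inequality, $N(\alpha_3)=M(1+o(1))$ for all but $o(\varphi(q))$ values of $\alpha_3\in\Phi(q)$, which for $N\ge q^{11/24+\varepsilon}$ is in particular positive, producing the desired solution with $(x_3,q)=1$; and $q^{11/24+\varepsilon}\le N\le q$ is exactly the admissible range. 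Under the Lindel\"of hypothesis for Dirichlet $L$‑functions one replaces Burgess's bound by the square‑root bound $|B_{\psi^2}|\ll N^{1/2+\varepsilon}$, valid throughout; then the constraint comes only from the $N^2$ term and reads $N\gg q^{1/3+\varepsilon}$, the asserted improvement.

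I expect the main difficulty to lie in two places. One is the Heath‑Brown input of the second step: his estimate for character sums of binary quadratic forms over small regions must be applied for arbitrary — in particular imprimitive — characters to a general odd modulus, with a power saving uniform in $\psi$; this is a deep ingredient, resting on the Riemann hypothesis for curves over finite fields, with no elementary substitute, and one must check that $11/24$ (and, conditionally, $1/3$) stays above its threshold. The other, characteristic of the passage from prime‑power $q$ (treated in \cite{GTQC}) to general odd $q$, is bookkeeping: the number of square roots of a unit modulo $q$, the $2^{\omega(q)}$ quadratic characters, and the Ramanujan, divisor and incomplete‑period losses in the lattice counts all grow with $\omega(q)$, and keeping every such loss inside $q^{\varepsilon}$ or $N^{\varepsilon}$ while still extracting the exact constant $C_q$ and the clean exponent $11/24$ is where the bulk of the technical work lies.
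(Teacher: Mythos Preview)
Your proposal follows essentially the same route as the paper: character orthogonality, isolation of the principal character as the main term, separate treatment of the order-two characters via Heath-Brown's binary-form estimate, and a variance estimate for the remaining characters combining orthogonality (for $\sum_\psi|A_\psi|^2$) with Burgess (for $|B_{\psi^2}|$). Your arithmetic with $\theta_r$, giving the minimum $11/24$ at $r=2,3$, is correct, as is the Lindel\"of endgame. The cosmetic difference---that you bound the order-two contribution pointwise in $\alpha_3$ whereas the paper folds it into the variance as a term $V_1$---does not matter, since the required bound on $A_\psi$ is independent of $\alpha_3$.

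There is, however, one genuine gap in your handling of the order-two characters. You assert that Heath-Brown's estimate yields $A_\psi\ll(2N)^{2-\delta}q^{\varepsilon}$ uniformly, ``since $N\ge q^{11/24+\varepsilon}$ is well above the admissible threshold, which is of order $q^{1/4+\varepsilon}$''. But both the threshold and the saving in Heath-Brown's bound are governed by the \emph{conductor} $q_1$ of $\psi$, not by $q$. When $q_1$ is small---say a fixed prime dividing $q$---one is in the range $R>q_1^{7/12}$, where Proposition~\ref{HeathBrown} gives only $(R^{5/3}q_1^{5/36}+R^2q_1^{-1/18})q_0^{\varepsilon}$; the dominant term $R^2q_1^{-1/18}$ carries no saving in $N$ and only a bounded saving in $q_1$, so the claimed $A_\psi\ll N^{2-\delta}q^{\varepsilon}$ fails outright. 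The paper closes this gap (Section~6) by a direct completion argument for small $q_1$: one splits into residue classes modulo $q_1$ and uses the Gauss-sum evaluation to show that the complete sum $\sum_{a_1,a_2\bmod q_1}\bigl(\tfrac{a_1^2+\alpha_2a_2^2}{q_1}\bigr)$ vanishes, yielding $L(q_1)\ll Nq_1^2q^{\varepsilon}$. This, combined with Heath-Brown's bound in the complementary range $q_1\ge q^{c\varepsilon}$ (together with a check that the $r$-windows $q_1^{1/4+1/(2r)}\le 2N\le q_1^{5/12+1/(2r)}$ for $3\le r\le 6$ overlap and fill $q_1^{1/3+o(1)}\le 2N\le q_1^{7/12}$), gives the required uniform bound $L(q_1)\ll\Delta^{1/2}N^2q^{-\varepsilon}$. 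So your instinct that the Heath-Brown input is the delicate point was correct, but the obstacle is not imprimitivity per se; it is the lack of uniformity at small conductors, and it is cured by completion rather than by Heath-Brown.
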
  

We see that the exponent $1/3$ cannot be reduced: If $N$ is much smaller than $q^{1/3}$, then the right-hand side of \eqref{asympform} is much smaller than 1, and thus we cannot expect any solutions in this case. 

The result in Theorem \ref{mainthm} raises the question if the above conjecture can be improved using information on the coefficients $\alpha_1,\alpha_2,\alpha_3$.  We will give a heuristic suggesting the following refined conjecture, taking into account Diophantine properties of the fractions $\alpha_i/q$. 

\begin{Conjecture} \label{conj} Let $q\in \mathbb{N}$ be odd and $\alpha_1,\alpha_2,\alpha_3\in \mathbb{Z}$ such that $(\alpha_1\alpha_2\alpha_3,q)=1$. Then the congruence
$$
\alpha_1x_1^2+\alpha_2x_2^2+\alpha_3x_3^2\equiv 0 \bmod{q}
$$ 
has a solution $(x_1,x_2,x_3)\in \mathbb{Z}^3$ satisfying $(x_1x_2x_3,q)=1$ of height
\begin{equation}\label{Nineq}
\ll q^{\varepsilon}\max\bigg\{q^{1/3},\max\limits_{\substack{r\bmod q\\r\not \equiv 0\bmod{q}}}\min\left\{\left|\left|r\alpha_1/q\right|\right|^{-1/2},\left|\left|r\alpha_2/q\right|\right|^{-1/2},\left|\left|r\alpha_3/q\right|\right|^{-1/2}\right\}\bigg\}.
\end{equation}
\end{Conjecture}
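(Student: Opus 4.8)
To motivate Conjecture~\ref{conj}, the plan is to argue heuristically in two steps: first to exhibit coefficients for which the smallest admissible solution is of exactly the predicted size, so that the bound cannot be lowered; and then to argue, via the circle method, that above this size such a solution must exist. For the first step I would fix a parameter $M$ with $1\le M\le q^{1/3}$ and $(M,q)=1$ and take $\alpha_1\equiv\alpha_2\equiv1$, $\alpha_3\equiv M\bmod q$. At $r=1$ one has $\|r\alpha_1/q\|=\|r\alpha_2/q\|=1/q$ and $\|r\alpha_3/q\|=M/q$, and a short elementary check (using $M\le q^{1/3}$) shows that no $r\not\equiv0\bmod q$ makes $\max_i\|r\alpha_i/q\|$ smaller than $M/q$; hence for these coefficients the right-hand side of \eqref{Nineq} equals $q^\varepsilon(q/M)^{1/2}$. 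On the other hand, if $(x_1,x_2,x_3)$ solves $x_1^2+x_2^2+Mx_3^2\equiv0\bmod q$ with height $N$ satisfying $3MN^2<q$, then $|x_1^2+x_2^2+Mx_3^2|<q$, which forces the exact identity $x_1^2+x_2^2+Mx_3^2=0$ and hence $(x_1,x_2,x_3)=(0,0,0)$ because the form is positive definite; thus every non-trivial solution --- in particular every one with $(x_1x_2x_3,q)=1$ --- has height $\gg(q/M)^{1/2}$, matching \eqref{Nineq} up to the factor $q^\varepsilon$. Taking $M=1$ recovers the classical bound $\sqrt{q/3}$ for $x_1^2+x_2^2+x_3^2$ and shows that $q^{1/3}$ alone would be too small. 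More generally, whenever a nonzero $r_0$ makes all the residues $s_i\equiv r_0\alpha_i\bmod q$ (with $|s_i|\le q/2$) small and the form $s_1x_1^2+s_2x_2^2+s_3x_3^2$ is anisotropic over $\mathbb{Q}$, the same argument --- carried out modulo $q/(r_0,q)$ after dividing the $s_i$ by $(r_0,q)$, the inequality $3MN^2<q$ with $M=\max_i|s_i|$ again forcing an exact identity --- would give a smallest solution of height $\gg(q/\max_i|s_i|)^{1/2}=\min_i\|r_0\alpha_i/q\|^{-1/2}$; this is why the Diophantine term in \eqref{Nineq} is the natural obstruction.

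For the second step I would detect the solutions of interest by additive characters modulo $q$, writing their number as
\begin{equation}\label{heur-count}
\frac1q\sum_{r\bmod q}\ \prod_{i=1}^{3}S_i(r),\qquad S_i(r):=\sum_{\substack{|x|\le N\\(x,q)=1}}e\!\left(\frac{r\alpha_ix^2}{q}\right),\qquad e(t):=\mathrm{e}^{2\pi i t}.
\end{equation}
Organising the sum over $r$ by the size of $(r,q)$, the ``main'' parts of the frequencies should assemble, after a standard singular-series computation, into a main term of order $N^3/q$ --- the analogue of the right-hand side of \eqref{asympform} --- which is positive once $N\gg q^{1/3+\varepsilon}$, accounting for the entry $q^{1/3}$ in \eqref{Nineq}. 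For the remaining frequencies $S_i(r)$ is an incomplete quadratic Gauss sum, and by the classical theory of such sums it is $o(N)$, with a power saving over the trivial bound $|S_i(r)|\le2N$, unless $r\alpha_i/q$ is abnormally close to a rational of small denominator; among all such resonances the only ones that can genuinely obstruct the \emph{existence} of a solution --- rather than merely distort its count --- are those for which $r\alpha_i\bmod q$ is itself small for all three $i$ at once, which is precisely the configuration measured by $\max_{r\not\equiv0}\min_i\|r\alpha_i/q\|^{-1/2}$. The heuristic is therefore that the contribution of the non-principal frequencies to \eqref{heur-count} is $o(N^3/q)$ --- so that \eqref{asympform} holds and a solution with $(x_1x_2x_3,q)=1$ exists --- precisely when $N$ exceeds the right-hand side of \eqref{Nineq}.

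The hard part will be making this second step rigorous near the threshold, and this is exactly where the conjecture remains open. Parseval's identity gives $\sum_{r\bmod q}|S_i(r)|^2\ll Nq$, valid for $N<\sqrt q$ since then $x^2\equiv y^2\bmod q$ forces $x=\pm y$; feeding this, a divisor-bound estimate for $\sum_{r\bmod q}|S_i(r)|^4$, and H\"older's inequality into \eqref{heur-count} controls the non-principal contribution by $o(N^3/q)$ only for $N\gg q^{2/3+\varepsilon}$, and no elementary refinement reaches the exponent $1/2$. Breaking $1/2$ requires exploiting cancellation among the factors $S_i(r)$ and in the sum over $r$; the strongest inputs available for this --- Burgess's estimate for short character sums and Heath-Brown's estimate for character sums twisted by binary quadratic forms, both resting on the Riemann hypothesis for curves over finite fields --- yield, exactly as in Theorem~\ref{mainthm}, only the exponent $11/24$ (and $1/3$ under the Lindel\"of hypothesis), and then only after discarding an $o(\varphi(q))$ set of exceptional $\alpha_3$. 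A fully uniform, Diophantine-sensitive bound valid for every admissible triple $(\alpha_1,\alpha_2,\alpha_3)$, as asserted in Conjecture~\ref{conj}, appears to require essentially new equidistribution information for the triples $(r\alpha_1x_1^2,r\alpha_2x_2^2,r\alpha_3x_3^2)$ modulo $q$, which is why it is offered only as a conjecture.
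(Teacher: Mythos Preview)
Your motivation is sound and your first step---showing that when some $r\not\equiv 0\bmod q$ makes all three residues $r\alpha_i\bmod q$ small of the same sign, the congruence collapses to a definite equation with no non-trivial solution---is exactly the argument the paper gives in its Heuristic section. The paper phrases it in full generality from the start (multiply by $r$, reduce, observe that if $\max_i\|r\alpha_i/q\|<1/(3N^2)$ the congruence becomes an equation), whereas you first illustrate it with the concrete family $\alpha_1=\alpha_2=1$, $\alpha_3=M$ before stating the general principle; the content is the same.

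Where you diverge is in the ``sufficiency'' half. You sketch a circle-method heuristic via additive characters, arguing that the non-principal frequencies in \eqref{heur-count} are negligible unless a simultaneous Diophantine obstruction occurs, and you then discuss why Parseval/H\"older only reaches $q^{2/3+\varepsilon}$ and why Burgess/Heath-Brown inputs recover only the $11/24$ of Theorem~\ref{mainthm}. The paper does none of this: it simply \emph{states} the conjecture that the necessary condition \eqref{*} together with $N\ge q^{1/3+\varepsilon}$ should also be sufficient, pointing to the conditional part of Theorem~\ref{mainthm} as evidence, and then performs a different sanity check you omit---namely, it verifies that Conjecture~\ref{conj} is consistent with Theorem~\ref{mainthm} by counting the $\alpha_3\in\{1,\dots,q\}$ for which $\max_{1\le r<q^{1/3}}\|r\alpha_3/q\|^{-1/2}>q^{1/3}$ and showing there are only $O(q^{2/3+\varepsilon})$ of them. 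Your additive-character heuristic is a reasonable complement, but if you want to align with the paper you should add that consistency check and could drop the speculative circle-method discussion.
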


In a nutshell, this conjecture tells us that the smallest solution should be of height $\ll q^{1/3+\varepsilon}$ unless $\alpha_1/q$, $\alpha_2/q$, $\alpha_3/q$ have good simultaneous approximation by fractions $a_1/r$, $a_2/r$, $a_3/r$ with a small denominator $r$, respectively (see our discussion in section \ref{heu} for details).  For example, if $\alpha_1=\alpha_2=\alpha_3=1$, then we may take $r=1$ and $a_1=a_2=a_3=0$ to recover the exponent $1/2+\varepsilon$.  
 
If we wish, we can extend Conjecture \ref{conj} to arbitrary ternary forms $Q$ with $(\det Q,q)=1$: In this case, the roles of $\alpha_1$, $\alpha_2$, $\alpha_3$ are taken by the eigenvalues of the matrix corresponding to  $Q$ modulo $q$. \\ \\
{\bf Acknowledgements.} The authors would like to thank the Ramakrishna Mission Vivekananda Educational and Research Insititute for an excellent work environment. The research of the second-named author was supported by a CSIR Ph.D fellowship under file number 09/0934(13170)/2022-EMR-I. 

\section{Preliminaries}
Our key tools are estimates for short character sums of the form
$$
S_1=\sum\limits_{|x|\le N} \chi(x)  \quad \mbox{and} \quad S_2=\sum\limits_{|x_1|,|x_2|\le N} \chi(Q(x_1,x_2)), 
$$
where $\chi$ is a non-principal Dirichlet character, $Q(x_1,x_2)$ is a binary quadratic form and $N$ is small compared to the modulus of $\chi$.  To this end, we use results by Burgess and Heath-Brown whose proofs rely on the Riemann hypothesis for curves over finite fields. However, we have to extend them from primitive to non-principal characters, which in the case of the sum $S_2$ takes some efforts. 
Below are the character sum estimates used in this paper.

\begin{Proposition}\label{Burgess}
Let $M\ge 0$, $N\in \mathbb{N}$ and $\chi$ be a non-principal Dirichlet character modulo $q>1$. Then
$$
\sum\limits_{M<n\le M+N} \chi(n) \ll_{r} N^{1-1/r}q^{(r+1)/(4r^2)+\varepsilon}
$$ 
for $r=2,3$, and for any $r\in \mathbb{N}$ if $q$ is cube-free. 
\end{Proposition}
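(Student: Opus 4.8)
The plan is to reduce the estimate for a general non-principal $\chi$ to the classical Burgess bound for \emph{primitive} Dirichlet characters, by passing to the conductor of $\chi$ and then reinstating the implicit coprimality restriction via Möbius inversion.

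Concretely, I would let $q^{\ast}\mid q$ be the conductor of $\chi$ and let $\chi^{\ast}$ be the primitive character modulo $q^{\ast}$ that induces $\chi$; since $\chi$ is non-principal we have $q^{\ast}>1$. Using $\chi(n)=\chi^{\ast}(n)\,\mathds{1}_{(n,q)=1}$, the identity $\mathds{1}_{(n,q)=1}=\sum_{d\mid (n,q)}\mu(d)$, and the complete multiplicativity of $\chi^{\ast}$, one obtains
$$
\sum_{M<n\le M+N}\chi(n)=\sum_{d\mid q}\mu(d)\chi^{\ast}(d)\sum_{M/d<m\le (M+N)/d}\chi^{\ast}(m),
$$
where only the squarefree $d\mid q$ with $(d,q^{\ast})=1$ contribute. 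For each such $d$ the inner sum runs over an interval of length $N/d$. If $d>N$ this interval contains at most one integer, so the inner sum is $O(1)$, and there are at most $\tau(q)$ such $d$. If $d\le N$, I would apply the Burgess bound to the primitive character $\chi^{\ast}$ modulo $q^{\ast}$ — after first reducing the interval modulo $q^{\ast}$, which is harmless because every full period of the non-principal character $\chi^{\ast}$ contributes zero — obtaining
$$
\biggl|\sum_{M/d<m\le (M+N)/d}\chi^{\ast}(m)\biggr|\ll_{r}\Bigl(\frac{N}{d}\Bigr)^{1-1/r}(q^{\ast})^{(r+1)/(4r^{2})+\varepsilon}
$$
for $r=2,3$, and for all $r\in\mathbb{N}$ when $q$ (hence $q^{\ast}$) is cube-free.

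Summing over $d\mid q$, using $(q^{\ast})^{(r+1)/(4r^{2})}\le q^{(r+1)/(4r^{2})}$ since $q^{\ast}\mid q$, and $\sum_{d\mid q}d^{-(1-1/r)}\le\tau(q)\ll q^{\varepsilon}$ (each summand being $\le 1$), the terms with $d\le N$ contribute $\ll_{r} q^{(r+1)/(4r^{2})+\varepsilon}N^{1-1/r}$, and the $O(\tau(q))$ arising from the terms with $d>N$ is absorbed because $q^{(r+1)/(4r^{2})}N^{1-1/r}\ge 1$. This yields the asserted estimate.

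I do not expect a genuine obstacle in this argument: the one substantial ingredient is Burgess's estimate for primitive characters, and everything else is routine bookkeeping with a divisor sum, which costs nothing since contracting the summation interval by a factor $d$ only helps and $\tau(q)=q^{o(1)}$. The only points requiring a little care are the trivial treatment of very short inner sums ($d>N$) and the reduction modulo $q^{\ast}$ for intervals longer than $q^{\ast}$. (The truly delicate extension in this circle of ideas is rather the corresponding one for the binary-quadratic-form sums $S_{2}$ flagged in the introduction, but that is a separate statement.)
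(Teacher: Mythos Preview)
Your argument is correct and is essentially the same as the paper's proof: pass to the primitive character inducing $\chi$, remove the coprimality condition by M\"obius inversion, apply Burgess's bound for primitive characters to each inner sum, and absorb the divisor sum into the $q^{\varepsilon}$. The only differences are cosmetic (you sum over $d\mid q$ rather than over $d\mid q/q^{\ast}$, and you add the harmless case-splitting $d\le N$ versus $d>N$ and the remark about reducing long intervals modulo $q^{\ast}$), but the substance is identical.
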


\begin{proof}
For primitive characters, this result is due to Burgess (see \cite[Theorem 12.6]{IwKo}, for example). Now let 
$\chi$ be a general non-principal character modulo $q>1$. Assume that $q=q_1q_2$, where $q_1>1$ is the conductor of $\chi$, so that $\chi=\chi_1\chi_2$, where $\chi_1$ is the primitive character modulo $q_1$ inducing $\chi$ and  $\chi_2$ is the principal character modulo $q_2$. Then it follows that 
\begin{equation*}
\begin{split}
\sum\limits_{M<n\le M+N} \chi(n)= & \sum\limits_{M<n\le M+N} \chi_1(n)\chi_2(n)\\
= & \sum\limits_{\substack{M<n\le M+N\\ (n,q_2)=1}}  \chi_1(n) \\
= & \sum\limits_{d|q_2} \mu(d)\sum\limits_{\substack{M<n\le M+N\\ d|n}}  \chi_1(n)\\
= & \sum\limits_{d|q_2} \mu(d)\chi_1(d)\sum\limits_{M/d<n\le M/d+N/d}  \chi_1(n)
\end{split}
\end{equation*}
using M\"obius inversion.
Now applying Burgess's estimate to the inner-most sum involving the primitive character $\chi_1$, we deduce that
\begin{equation*}
\begin{split}
\sum\limits_{d|q_2} \mu(d)\chi_1(d)\sum\limits_{M/d<n\le M/d+N/d}  \chi_1(n)\ll_{r} & \sum\limits_{d|q_2}  \left(\frac{N}{d}\right)^{1-1/r}q_1^{(r+1)/(4r^2)+\varepsilon}\\ 
\ll &
 N^{1-1/r}q^{(r+1)/(4r^2)+2\varepsilon}.
\end{split}
\end{equation*}
Hence, the claimed estimate follows upon redefining $\varepsilon$. 
\end{proof}

Under the Lindel\"of Hypothesis for Dirichlet $L$-functions, we have the following sharper estimate for the case when $M=0$.

\begin{Proposition}\label{Linde}
Let $N\in \mathbb{N}$ and $\chi$ be a non-principal Dirichlet character modulo $q>1$ which is induced by a primitive character $\chi_1$. Then
$$
\sum\limits_{0<n\le N} \chi(n) \ll  N^{1/2}q^{\varepsilon},
$$ 
provided that $L(1/2+it,\chi_1)\ll (|t|q)^{\varepsilon}$ whenever $|t|\ge 1$.  
\end{Proposition}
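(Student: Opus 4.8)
The plan is to evaluate the sum by truncated Perron's formula, shift the contour of integration to the critical line $\Re s=1/2$, and control the resulting integral by the assumed Lindel\"of bound, a classical convexity bound for $L(s,\chi_1)$ being enough on the horizontal parts of the contour.

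\emph{Step 1: normalization and reduction to the primitive character.} First, since $\sum_{n=1}^{q}\chi(n)=0$ for a non-principal character, replacing $N$ by its least non-negative residue modulo $q$ changes neither the left-hand side nor increases $N$, so I would assume $N<q$. Next I would strip off the imprimitive part exactly as in the proof of Proposition \ref{Burgess}: writing $q=q_1q_2$ with $q_1>1$ the conductor of $\chi$ and $\chi=\chi_1\chi_2$, M\"obius inversion gives
\[
\sum_{0<n\le N}\chi(n)=\sum_{d\mid q_2}\mu(d)\chi_1(d)\sum_{0<m\le N/d}\chi_1(m),
\]
and since the number of divisors of $q_2$ is $\ll q^\varepsilon$ it suffices to show $\sum_{0<m\le y}\chi_1(m)\ll y^{1/2}q^\varepsilon$ for all real $y$ with $1\le y<q$. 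For $y\le q^{2\varepsilon}$ this is the trivial bound $\big|\sum_{0<m\le y}\chi_1(m)\big|\le y\le y^{1/2}q^\varepsilon$, so from now on assume $q^{2\varepsilon}<y<q$; I would also replace $y$ by the nearest half-integer, which affects the sum by $O(1)=O(y^{1/2})$ and keeps the point $m=y$ away from the argument.

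\emph{Step 2: Perron and the contour shift.} I would apply truncated Perron's formula with $c=1+1/\log y$ and truncation height $T=q^{1/2}$,
\[
\sum_{0<m\le y}\chi_1(m)=\frac{1}{2\pi i}\int_{c-iT}^{c+iT}L(s,\chi_1)\frac{y^s}{s}\,ds+O\!\Big(\frac{y\log y}{T}\Big),
\]
where the error is $\ll y^{1/2}q^\varepsilon$ because $T=q^{1/2}\ge y^{1/2}$. Since $\chi_1$ is primitive of conductor $q_1>1$, $L(s,\chi_1)$ is entire, and the only pole of $L(s,\chi_1)y^s/s$, at $s=0$, lies to the left of $\Re s=1/2$; hence moving the contour to $\Re s=1/2$ crosses no pole and
\[
\frac{1}{2\pi i}\int_{c-iT}^{c+iT}L(s,\chi_1)\frac{y^s}{s}\,ds=\frac{1}{2\pi i}\int_{1/2-iT}^{1/2+iT}L(s,\chi_1)\frac{y^s}{s}\,ds+I_{\mathrm{hor}},
\]
with $I_{\mathrm{hor}}$ the contribution of the two horizontal segments at height $\pm T$.

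\emph{Step 3: estimating the pieces.} On the horizontal segments I would insert the convexity bound $L(\sigma+iT,\chi_1)\ll(q_1T)^{(1-\sigma)/2+\varepsilon}$ for $1/2\le\sigma\le1$ (and $\ll\log(qT)$ for $\sigma\ge1$) together with $|y^s/s|\le y^\sigma/T$; with $T=q^{1/2}$ and $y<q$ a short computation gives $I_{\mathrm{hor}}\ll q^\varepsilon\big(yq^{-1/2}+y^{1/2}q^{-1/8}\big)\ll y^{1/2}q^\varepsilon$. For the integral over $\Re s=1/2$ I would split the range of $t$. For $1\le|t|\le T$ the hypothesis gives $|L(1/2+it,\chi_1)|\ll(|t|q)^\varepsilon$ and $|y^s/s|\le y^{1/2}/|t|$, so this part is $\ll y^{1/2}q^\varepsilon\int_1^T t^{\varepsilon-1}\,dt\ll y^{1/2}q^\varepsilon T^\varepsilon\ll y^{1/2}q^{2\varepsilon}$. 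For $|t|<1$ one has $|y^s/s|\le2y^{1/2}$, and using $|L(1/2+it,\chi_1)|\ll q^\varepsilon$ on this range (the $|t|<1$ case of the Lindel\"of bound) this part is $\ll y^{1/2}q^\varepsilon$. Collecting the estimates and undoing the M\"obius reduction would give the asserted bound after redefining $\varepsilon$.

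\emph{Main obstacle.} The genuinely delicate point is the behaviour near $t=0$: there the convexity bound yields only $L(1/2,\chi_1)\ll q_1^{1/4+\varepsilon}$, which is far too weak (it would produce a spurious factor $q^{1/4}$), so one truly needs the Lindel\"of-type estimate on a neighbourhood of the central point and not merely for $|t|\ge1$. Intertwined with this is the calibration of parameters: $T$ must be chosen so that the Perron error, the horizontal segments, and the tail $1\le|t|\le T$ of the central integral are all $\ll y^{1/2}q^\varepsilon$ at once, and one should verify that $T\asymp q^{1/2}$ does the job uniformly throughout $q^{2\varepsilon}<y<q$. Everything else is routine bookkeeping.
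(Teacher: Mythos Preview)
Your approach is essentially the same as the paper's: reduce to the primitive character via M\"obius inversion (exactly as in the proof of Proposition~\ref{Burgess}), then apply Perron's formula and shift the contour to $\Re s = 1/2$. The paper gives only a two-line sketch, and your argument supplies the standard details faithfully. Your observation that one genuinely needs the Lindel\"of bound near $t=0$, not merely for $|t|\ge 1$, is well taken; this is a minor imprecision in the hypothesis as stated in the proposition rather than a defect of the method --- the full Lindel\"of hypothesis for Dirichlet $L$-functions (as invoked in Theorem~\ref{mainthm}) is clearly what is meant, and with it your estimate of the piece $|t|<1$ goes through.
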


\begin{proof}Similarly as in the proof of Proposition \ref{Burgess}, we first reduce the sum in question to a sum involving the primitive character $\chi_1$. The result then follows in a standard way by using Perron's formula and contour integration, shifting the line of integration to $\Re s=1/2$.  
\end{proof}

\begin{Proposition}\label{HeathBrown}
Let an integer $r\ge 3$ be given, and suppose that $C\subset \mathbb{R}^2$ is a convex set contained in a disc $\{{\bf x} \in \mathbb{R}^2:
 ||{\bf x}-{\bf x}_0||_2 \le R\}$, $||.||_2$ denoting the Euclidean norm. Let $q_0 \ge 2$ be odd and squarefree, and let $\chi$ be a non-principal character modulo $q_0$ with conductor $q_1>1$. Then if $Q(x,y)$ is a binary integral quadratic form with $(\det(Q), q) = 1$, we have
\begin{equation} \label{smallR}
\sum\limits_{(x,y)\in C}  \chi(Q(x, y)) \ll_{r} R^{2-1/r}q_1^{(r+2)/(4r^2)}q_0^{\varepsilon} \quad 
\mbox{ if } q_1^{1/4+1/(2r)} \le R \le q_1^{5/12+1/(2r)}
\end{equation}
and 
\begin{equation} \label{largeR}
\sum\limits_{(x,y)\in C}  \chi(Q(x, y)) \ll \left(R^{5/3}q_1^{5/36}+R^2q_1^{-1/18}\right)q_0^{\varepsilon} \quad  \mbox{ if } R> q_1^{7/12}. 
\end{equation}
\end{Proposition}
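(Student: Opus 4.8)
The plan is to follow the same reduction used in the proof of Proposition~\ref{Burgess}. Write $q_0=q_1q_2$ with $q_1>1$ the conductor of $\chi$, so that $\chi=\chi_1\chi_2$ with $\chi_1$ the primitive character modulo $q_1$ inducing $\chi$ and $\chi_2$ the principal character modulo $q_2$, and use Heath-Brown's estimate for the primitive character $\chi_1$ (whose proof rests on the Riemann hypothesis for curves over finite fields). Since $\chi_2(Q(x,y))$ is the indicator of $(Q(x,y),q_2)=1$, Möbius inversion gives
\[
\sum_{(x,y)\in C}\chi(Q(x,y))=\sum_{d\mid q_2}\mu(d)\sum_{\substack{(x,y)\in C\\ d\mid Q(x,y)}}\chi_1(Q(x,y))=:\sum_{d\mid q_2}\mu(d)\,T_d.
\]
The new difficulty, compared with Proposition~\ref{Burgess}, is that the condition $d\mid Q(x,y)$ is two-dimensional: instead of a single dilation it cuts out a union of sublattices of $\mathbb{Z}^2$, and one must control both their number and their shape so that the sum over $d\mid q_2$ ultimately costs only $q_0^{\varepsilon}$ and not, say, $q_0^{1/r}$.

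To treat $T_d$ I would use that $d$ is odd, squarefree and coprime to $\det Q$ (since $(\det Q,q_0)=1$). Modulo each prime $p\mid d$ the form $Q$ is nondegenerate, hence either anisotropic --- so $p\mid Q(x,y)$ iff $p\mid x$ and $p\mid y$ --- or isotropic, so $Q\equiv\lambda\,\ell_p^{(1)}\ell_p^{(2)}\bmod p$ with $\lambda$ a unit and $\ell_p^{(1)},\ell_p^{(2)}$ linearly independent linear forms, whence $\mathds{1}[p\mid Q(x,y)]=\mathds{1}[p\mid\ell_p^{(1)}(x,y)]+\mathds{1}[p\mid\ell_p^{(2)}(x,y)]-\mathds{1}[p\mid x,\ p\mid y]$. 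Multiplying these relations over $p\mid d$ and expanding by the Chinese remainder theorem expresses $\mathds{1}[d\mid Q(x,y)]$ as a $\pm1$-combination of at most $3^{\omega(d)}$ indicators $\mathds{1}[(x,y)\in\Lambda]$ of sublattices $\Lambda\subseteq\mathbb{Z}^2$ of index dividing $d^2$ (hence coprime to $q_1$). So $T_d$ breaks into $\ll 3^{\omega(d)}$ sums $\sum_{(x,y)\in C\cap\Lambda}\chi_1(Q(x,y))$.

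For each such $\Lambda$ I would pick a reduced $\mathbb{Z}$-basis $v_1,v_2$; as its shortest vector has length at least $1$, the substitution $(u,v)\mapsto(x,y)=uv_1+vv_2$ turns the sum into $\sum_{(u,v)\in C'}\chi_1(Q_\Lambda(u,v))$, where $Q_\Lambda(u,v):=Q(uv_1+vv_2)$ is an integral binary quadratic form with $\det Q_\Lambda=[\mathbb{Z}^2:\Lambda]^2\det Q$ coprime to $q_1$, and $C'$ is a convex set contained in a disc of radius $\ll R$ (the inverse basis matrix has operator norm $\ll1$ because $|v_1|\ge1$). Heath-Brown's primitive-character estimate then applies to each of these $\ll\sum_{d\mid q_2}3^{\omega(d)}\ll q_0^{\varepsilon}$ sums: in the regime of \eqref{smallR} it gives $\ll R^{2-1/r}q_1^{(r+2)/(4r^2)}q_0^{\varepsilon}$, and in the regime of \eqref{largeR} one combines \eqref{largeR} with the case $r=3$ of \eqref{smallR}, whose admissible interval $[q_1^{5/12},q_1^{7/12}]$ abuts the interval $R>q_1^{7/12}$ and so leaves no radius uncovered; when the radius of $C'$ is too small to apply these, the trivial bound (of size $\ll\text{radius}^2+1$) is already dominated by the target, by a short computation using $R\ge q_1^{1/4+1/(2r)}$ (resp.\ $R>q_1^{7/12}$). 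Summing the $\ll q_0^{\varepsilon}$ contributions and redefining $\varepsilon$ yields the stated bounds.

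The main obstacle I anticipate is the second step: replacing the divisibility condition $d\mid Q(x,y)$ by a controlled family of sublattice sums --- getting the anisotropic/isotropic dichotomy exactly right at each prime, keeping the number of pieces at $3^{\omega(d)}$ so the $d$-sum costs only $q_0^{\varepsilon}$, and re-coordinatizing so that the transformed region still lies in a disc of radius $O(R)$ (rather than $R$ times an index) while the transformed form retains determinant coprime to $q_1$. A lesser technical point is checking that \eqref{smallR}, \eqref{largeR} and trivial estimates together cover every radius $\le R$ that can arise, which is precisely where the freedom to take $r=3$ in \eqref{smallR} inside the large-$R$ regime is used.
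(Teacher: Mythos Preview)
Your reduction via M\"obius inversion to $\sum_{d\mid q_2}\mu(d)\Sigma_d$ is the same first step as the paper's, but from there the two proofs diverge. The paper does \emph{not} try to use Heath-Brown's result as a black box. Instead it re-runs Heath-Brown's entire amplification argument (good vectors, H\"older, $2r$-th moments, Riemann hypothesis for curves) with the extra constraint $d\mid Q(x_1,x_2)$ carried through; the new ingredient is that in the moment sum $S(q_1,H)=\sum_{a,b\bmod q_1}\bigl|\sum_{n\le H,\ f\mid \widetilde Q(n+a,b)}\chi_1(\widetilde Q(n+a,b))\bigr|^{2r}$ one enlarges the outer sum to residues modulo $q_1f$ and separates variables via the Chinese remainder theorem (using $(f,q_1)=1$), which reduces to Heath-Brown's moment bound with $H$ replaced by $H/f$ and an innocuous factor $f^{2+\varepsilon}$. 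Both \eqref{smallR} and \eqref{largeR} then drop out of the single estimate $\Sigma_d^{2r}\ll_r (q_1d)^{\varepsilon} q_1^{1/2+1/r}R^{4r-2}(R^{-2}q_1^{1/2+1/r}+1+R^2q_1^{-5/6-1/r})$, the latter by taking $r=3$.

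Your sublattice decomposition is a genuinely different route, and for \eqref{smallR} it works: the isotropic/anisotropic split is correct, the reduced-basis change of variables does keep the new radius $R'\ll R/|v_1|\le R$ and the new determinant coprime to $q_1$, and your check that the trivial bound covers the regime $R'<q_1^{1/4+1/(2r)}$ is fine. The advantage is modularity---you invoke Heath-Brown's Theorem~3 off the shelf rather than reproving it.

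The gap is in \eqref{largeR}. Heath-Brown's paper only \emph{states} the bound \eqref{smallR} for primitive characters; the large-$R$ bound \eqref{largeR} is not available there even in the primitive case, so you cannot ``combine \eqref{largeR} with the case $r=3$ of \eqref{smallR}'' without circularity. To get the primitive-character version of \eqref{largeR} you would have to go back into Heath-Brown's proof and extract the general estimate above with $r=3$ and $R>q_1^{7/12}$---which is exactly what the paper does, and at that point you have re-opened the argument anyway. So your approach yields a clean alternative proof of \eqref{smallR}, but for \eqref{largeR} it collapses back to (a special case of) the paper's method.
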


\begin{proof}
For primitive characters, the estimate \eqref{smallR} was established by Heath-Brown in \cite{HB}[Theorem 3]. We will extend his arguments in \cite{HB} to cover general non-principal characters and the range $R>q_1^{7/12}$ in the appendix. 
\end{proof}

We will also use the following well-known results about quadratic Gauss sums. 

\begin{Proposition} \label{Gauss sums}
Let $c$ be odd and squarefree, and assume that $(a,c)=1$. Set  
\begin{equation} \label{Gaussdef}
G(a,c):=\sum\limits_{n=1}^c e\left(\frac{an^2}{c}\right).
\end{equation}
Then 
\begin{equation} \label{Gaussev}
G(a,c)=\left(\frac{a}{c}\right)\cdot \epsilon_c\sqrt{c},
\end{equation}
where 
\begin{equation} \label{tauev}
\epsilon_c=\begin{cases} 
      1 & \text{if } c\equiv 1 \bmod{4} \\
      i & \text{if } c\equiv 3 \bmod{4}.
\end{cases}
\end{equation}
Moreover, for all $n\in \mathbb{Z}$, we have the relation
\begin{equation} \label{relation}
\left(\frac{n}{c}\right)=\frac{1}{\epsilon_c \sqrt{c}}\cdot \sum_{k=1}^{c}\left(\frac{k}{c}\right)e\left(\frac{nk}{c}\right).
\end{equation} 
\end{Proposition}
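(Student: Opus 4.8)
The final statement to prove is Proposition~\ref{Gauss sums} on quadratic Gauss sums. The plan is to reduce everything to the classical evaluation of the Gauss sum for odd prime moduli and then bootstrap via multiplicativity. First I would recall that for an odd prime $p$ and $(a,p)=1$, the substitution $n\mapsto n^2$ maps each nonzero square class twice, so $G(a,p)=\sum_{m=0}^{p-1}\left(1+\left(\frac{m}{p}\right)\right)e(am/p)=\sum_{m}\left(\frac{m}{p}\right)e(am/p)=\left(\frac{a}{p}\right)\sum_{m}\left(\frac{m}{p}\right)e(m/p)=\left(\frac{a}{p}\right)\tau_p$, where $\tau_p$ is the quadratic Gauss sum. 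Then I would invoke Gauss's celebrated evaluation $\tau_p=\epsilon_p\sqrt{p}$ with $\epsilon_p$ as in \eqref{tauev}; this is the one genuinely nontrivial input, and rather than reprove it I would simply cite a standard reference (e.g. Iwaniec--Kowalski or Davenport). This gives \eqref{Gaussev} for prime $c$.

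Next I would extend to odd squarefree $c=p_1\cdots p_k$ by the Chinese Remainder Theorem. Writing $c=p m$ with $(p,m)=1$, a change of variables using CRT shows $G(a,c)=G(a\overline{m}^2,p)\cdot G(a\overline{p}^2,m)$ — more symmetrically, $G(a,p m)=G(a m,p)\,G(a p,m)$ after absorbing squares into the Legendre symbol (squares are harmless since $\left(\frac{\square}{\cdot}\right)=1$ for units). Iterating, $G(a,c)=\prod_i \left(\frac{a}{p_i}\right)\epsilon_{p_i}\sqrt{p_i}=\left(\frac{a}{c}\right)\sqrt{c}\prod_i\epsilon_{p_i}$, where I used multiplicativity of the Jacobi symbol in the lower entry. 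It then remains to check the elementary identity $\prod_{i=1}^k \epsilon_{p_i}=\epsilon_c$, which follows by induction on $k$ from the fact that $\epsilon_{ab}=\epsilon_a\epsilon_b$ when $a\equiv -1\bmod 4$ combined with quadratic-reciprocity-style bookkeeping; concretely $\epsilon_c$ depends only on $c\bmod 4$, and the number of prime factors $p_i\equiv 3\bmod 4$ determines both $c\bmod 4$ and the product of the $\epsilon_{p_i}$ consistently — this is a short finite check.

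Finally, for the relation \eqref{relation}: fix $n$. If $(n,c)=1$, replace $k$ by $n k$ (a bijection mod $c$) in the right-hand side to get $\frac{1}{\epsilon_c\sqrt c}\left(\frac{\overline n}{c}\right)\sum_k\left(\frac{k}{c}\right)e(k/c)=\left(\frac{\overline n}{c}\right)=\left(\frac{n}{c}\right)$, using \eqref{Gaussev} with $a=1$ to evaluate the complete sum as $\epsilon_c\sqrt c$ (since $\sum_k\left(\frac{k}{c}\right)e(k/c)$ is exactly the Gauss sum $\tau_c$, and for squarefree $c$ it equals $G(1,c)$ by the square-doubling argument componentwise). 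If $(n,c)>1$, write $c=d e$ with $d=(n,c)>1$; both sides vanish — the left by definition of the Jacobi/Legendre symbol, the right because the character sum factors through a nontrivial subgroup and cancels — a routine orthogonality argument completes this case.

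The main obstacle is purely one of citation versus self-containment at the prime level: the evaluation $\tau_p=\epsilon_p\sqrt p$ (determining the sign of the Gauss sum) is the deep classical fact, and I would quote it. Everything else — the square-doubling identity, CRT multiplicativity, and the $\epsilon$-product bookkeeping — is elementary, and given that the proposition is labelled "well-known," I expect the author's proof to be brief, likely just pointing to \cite{IwKo} or a similar reference for all parts.
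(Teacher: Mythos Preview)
Your prediction is exactly right: the paper's ``proof'' is a single sentence referring the reader to \cite{BEW}, Chapter~1. Your sketch goes well beyond that and is essentially sound, but there is one genuine slip in the multiplicativity step. From $G(a,pm)=G(am,p)\,G(ap,m)$ you do \emph{not} obtain $G(a,c)=\prod_i \left(\frac{a}{p_i}\right)\epsilon_{p_i}\sqrt{p_i}$ on iteration: evaluating $G(am,p)=\left(\frac{am}{p}\right)\epsilon_p\sqrt{p}$ produces the extra cross-factor $\left(\frac{m}{p}\right)$, and these accumulate. Consequently the identity you then try to check, $\prod_i\epsilon_{p_i}=\epsilon_c$, is simply false --- take $c=21$, where $\epsilon_3\epsilon_7=i\cdot i=-1$ but $\epsilon_{21}=1$. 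The correct relation is
\[
\left(\frac{m}{p}\right)\left(\frac{p}{m}\right)\epsilon_p\,\epsilon_m=\epsilon_{pm},
\]
which is precisely (the Jacobi-symbol form of) quadratic reciprocity. So the ``quadratic-reciprocity-style bookkeeping'' you mention in passing is not a side remark but the actual content of the inductive step; once you insert it correctly, your argument goes through. Everything else --- the square-doubling identity at primes, the CRT factorisation, and the vanishing of the right-hand side of \eqref{relation} when $(n,c)>1$ via the inner sum $\sum_{m\bmod d}\left(\frac{m}{d}\right)=0$ --- is fine.
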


\begin{proof} These properties of quadratic Gauss sums can be found in \cite{BEW}[Chapter 1], for example.
\end{proof}

\section{Initial approach}
We first observe that we may assume without loss of generality that $\alpha_1=1$ in Theorem \ref{mainthm} since otherwise, we may divide our quadratic congruence by $\alpha_1$. Now
our initial approach is very similar to that in \cite{GTQC}. We literally copy several steps.

Suppose that the conditions in Theorem \ref{mainthm} are satisfied and $\alpha_1=1$.  Set 
$$
S(\alpha_3):=\sum\limits_{\substack{|x_1|,|x_2|,|x_3|\le N\\ (x_3,q)=1\\ 
x_1^2+\alpha_2x_2^2+\alpha_3x_3^2\equiv 0 \bmod{q}}} 1.
$$
We detect the congruence condition
$$
x_1^2+\alpha_2x_2^2+\alpha_3x_3^2\equiv 0 \bmod{q}
$$
via orthogonality relations for Dirichlet characters. Recalling the condition $(\alpha_3x_3,q)=1$, we have 
$$
\frac{1}{\varphi(q)}
\sum\limits_{\chi \bmod q} \chi\left(x_1^2+\alpha_2x_2^2\right)\overline{\chi}\left(-\alpha_3 x_3^2\right) = \begin{cases} 1 & \mbox{ if } x_1^2+\alpha_2x_2^2+\alpha_3x_3^2\equiv 0 \bmod{q}\\ 0 & \mbox{ if }x_1^2+\alpha_2x_2^2+\alpha_3x_3^2\not\equiv 0 \bmod{q}. \end{cases}
$$
It follows that 
$$
S(\alpha_3)= \frac{1}{\varphi(q)} \sum\limits_{\chi \bmod{q}}\
\sum_{|x_1|,|x_2|,|x_3|\le N}  \chi\left(x_1^2+\alpha_2x_2^2\right)\overline{\chi}\left(-\alpha_3 x_3^2\right).
$$
The main term contribution comes from the principal character $\chi_0 \bmod{q}$. Thus we may split the above into
\begin{equation} \label{splitting}
S(\alpha_3)=M+E(\alpha_3),
\end{equation}
where 
\begin{equation} \label{maintermdef}
M:=\frac{1}{\varphi(q)} 
\sum_{\substack{|x_1|,|x_2|,|x_3|\le N\\ \left(x_1^2+\alpha_2x_2^2,q\right)=1\\  (x_3,q)=1}} 1
\end{equation}
is the main term and 
\begin{equation} \label{Edef}
E(\alpha_3):=\frac{1}{\varphi(q)} \sum\limits_{\substack{\chi \bmod{q}\\ \chi\not=\chi_0}}\
\sum_{|x_1|,|x_2|,|x_3|\le N}  \chi\left(x_1^2+\alpha_2x_2^2\right)\overline{\chi}\left(-\alpha_3 x_3^2\right)
\end{equation}
is the error term. The main term will be evaluated in the next section. 

To derive Theorem \ref{mainthm}, we will estimate the variance 
\begin{equation} \label{Vdefi}
V:=\sum\limits_{\substack{\alpha_3=1\\ (\alpha_3,q)=1}}^q \left|S(\alpha_3)-M\right|^2=\sum\limits_{\substack{\alpha_3=1\\ (\alpha_3,q)=1}}^q \left|E(\alpha_3)\right|^2.
\end{equation}
Our goal is to beat the estimate $O\left(N^6q^{-1}\right)$ in order to deduce that for almost all $\alpha_3 \bmod{q}$ with $(\alpha_3,q)=1$, the size of the error term $E(\alpha_3)$ is smaller than that of the main term $M$. Plugging in the right-hand side of \eqref{Edef} for $E(\alpha_3)$ and using orthogonality relations for Dirichlet characters, we have 
\begin{equation*}
\begin{split}
V=&\frac{1}{\varphi(q)^2} \sum\limits_{\alpha_3=1}^q \bigg|\sum\limits_{\substack{\chi \bmod{q}\\ \chi\neq\chi_0}}\overline{\chi}(-\alpha_3)\sum\limits_{|x_1|,|x_2|\leq N} \chi\left(x_1^2+\alpha_2 x_2^2\right)\sum\limits_{|x_3|\leq N}\overline{\chi}^2(x_3)\bigg|^2\\
=&\frac{1}{\varphi(q)^2}\sum\limits_{\substack{\chi_1,\chi_2\bmod{q}\\ \chi_1,\chi_2\neq \chi_0}}\ \sum\limits_{\alpha_3=1}^q \overline{\chi_1}\chi_2(-\alpha_3)\sum\limits_{|x_1|,|x_2|\leq N}\chi_1\left(x_1^2+\alpha_2x_2^2\right)\sum\limits_{|y_1|,|y_2|\leq N} \overline{\chi_2}\left(y_1^2+\alpha_2y_2^2\right)\times\\ & \sum\limits_{|x_3|\leq N} \overline{\chi_1}^2(x_3)  \sum\limits_{|y_3|\leq N} \chi_2^2(y_3)\\
=&\frac{1}{\varphi(q)}\sum\limits_{\substack{\chi \bmod{q}\\ \chi\neq\chi_0}}\bigg|\sum\limits_{|x_1|,|x_2|\leq N} \chi\left(x_1^2+\alpha_2x_2^2\right)\sum_{|x_3|\leq N}\overline{\chi}^2(x_3)\bigg|^2.
\end{split}
\end{equation*}

Next, we separate the summation into two parts: the contributions of characters $\chi$ with $\chi^2=\chi_0$ and $\chi^2\not=\chi_0$, respectively. We note that the only characters modulo $q$ of order two are of the form 
$$
\chi(x)=\left(\frac{x}{q_1}\right)\chi_2(x),
$$ 
where $q_1>1$, $q_1q_2=\mbox{rad}(q)$ is the largest squarefree divisor of $q$ (the radical of $q$),  $\left(\frac{x}{q_1}\right)$ is the Jacobi symbol, and $\chi_2$ is the principal character modulo $q_2$. To see this, note that these characters are indeed of order two, there are $2^{\omega(q)}-1$ characters of this form, and the number of elements of order two in $(\mathbb{Z}/q\mathbb{Z})^{\ast}$ is $2^{\omega(q)}-1$ as well  (recall that the 
character group modulo $q$ is isomorphic to $(\mathbb{Z}/q\mathbb{Z})^{\ast}$). The latter is a consequence of the Chinese remainder theorem and Hensel's lemma. Hence, these are the only characters modulo $q$ of order two. Consequently, we obtain
\begin{equation} \label{Vsplit}
V=V_1+V_2,
\end{equation}
where 
\begin{equation}\label{s1}
    V_1:=\frac{1}{\varphi(q)} \cdot  \sum\limits_{\substack{q_1|\text{rad}(q)\\ q_1>1}} \bigg|\sum_{\substack{|x_1|,|x_2|\leq N\\ (x_1^2+\alpha_2x_2^2,q_2)=1}} \left(\frac{x_1^2+\alpha_2x_2^2}{q_1}\right)\bigg|^2\cdot \bigg|\sum\limits_{\substack{|x_3|\le N\\ (x_3,p)=1}} 1\bigg|^2 
\end{equation}
with $q_1q_2=\mbox{rad}(q)$, and 
\begin{equation}\label{s2}
    V_2=\frac{1}{\varphi(q)} \sum\limits_{\substack{\chi\bmod{q}\\ \chi^2\neq \chi_0}}\bigg|\sum_{|x_1|,|x_2|\leq N} \chi\left(x_1^2+\alpha_2x_2^2\right) \bigg|^2 \cdot \bigg| \sum_{|x_3|\leq N} \overline{\chi}^2(x_3)\bigg|^2.
\end{equation}    

\section{Approximation of the main term}
In this section, we approximate the main term $M$, defined in \eqref{maintermdef}. We begin by writing 
\begin{equation*} 
M=\frac{1}{\varphi(q)}\cdot KL,
\end{equation*}
where 
$$
K:=\sum_{\substack{|x_1|,|x_2|\le N\\ \left(x_1^2+\alpha_2x_2^2,q\right)=1}} 1
$$
and 
$$
L:=\sum_{\substack{|x_3|\le N\\ (x_3,q)=1}} 1.
$$
Using M\"obius inversion and the bound $\tau(n)\ll n^{\varepsilon}$ for the divisor function, the term $L$ above can be approximated by
\begin{equation*} 
L=\sum_{d|q}\mu(d)\sum_{\substack{|x_3|\le N\\ d|x_3}} 1=\sum_{d|q}\mu(d)\left(\frac{2N}{d}+O(1)\right)=2N\cdot \frac{\varphi(q)}{q}+O\left(q^{\varepsilon}\right).
\end{equation*}

Similarly, we use M\"obius inversion to write the term $K$ above as
\begin{align*}
K:=\sum_{d|q}\mu(d)\sum\limits_{\substack{|x_1|,|x_2|\leq N\\d|(x_1^2+\alpha_2 x_2^2)}}1.
\end{align*}
If $d$ is squarefree and $(d,x_1)=e$, then $d|(x_1^2+\alpha_2 x_2^2)$ is equivalent to $e|x_2$ and $(d/e)|(x_1^2+\alpha_2 x_2^2)$. Hence, splitting, the right-hand side above into subsums according to the greatest common divisor of  $d$ and $x_1$, we get
\begin{align*}
K= \sum_{d|q}\mu(d)\sum_{e|d}\sum\limits_{\substack{|x_1|\leq N\\(d,x_1)=e}}\sum\limits_{\substack{|x_2|\leq N\\e|x_2\\ x_1^2+\alpha_2 x_2^2\equiv 0\bmod{d/e}}}1.
\end{align*}
Writing $x_1=y_1e$ and $x_2=y_2e$ and using the fact that $(d/e,e)=1$ if $d$ is squarefree, it follows that
\begin{align*}
K= &\sum_{d|q}\mu(d)\sum_{e|d}\sum\limits_{\substack{|y_1|\leq N/e\\(d/e,y_1)=1}}\sum\limits_{\substack{|y_2|\leq N/e\\ y_1^2+\alpha_2 y_2^2\equiv 0\bmod{d/e} }}1.
\end{align*}
Using the Chinese remainder theorem, for any given $y_1$ coprime to $d/e$, the total number of solutions $y_2$ of the congruence above equals 
$\prod_{p|(d/e)}\left(1+\left(\frac{-\alpha_2}{p}\right)\right)$. Consequently,
\begin{align*}
K =&\sum_{d|q}\mu(d)\sum_{e|d}\Bigg(\sum\limits_{\substack{|y_1|\leq N/e\\(d/e,y_1)=1}} 1\Bigg)\prod_{p|(d/e)}\left(1+\left(\frac{-\alpha_2}{p}\right)\right)\left(\frac{2N}{d}+O(1)\right)\\
=&\sum_{d|q}\mu(d)\left(\frac{2N}{d}+O(1)\right)\sum_{e|d}\left(\frac{2N}{e}\cdot \frac{\varphi(d/e)}{d/e}+O(1)\right) \prod_{p|(d/e)}\left(1+\left(\frac{-\alpha_2}{p}\right)\right)\\
=& (2N)^2 \sum_{d|q}\frac{\mu(d)}{d^2} \cdot \sum_{e|d} \varphi(d/e)\cdot  \prod_{p|(d/e)}\left(1+\left(\frac{-\alpha_2}{p}\right)\right)+O\left(Nq^{\varepsilon}\right)\\
= & (2N)^2 \sum_{d|q}\frac{\mu(d)}{d^2} \cdot \sum_{f|d} \varphi(f)\cdot  \prod_{p|f}\left(1+\left(\frac{-\alpha_2}{p}\right)\right)+O\left(Nq^{\varepsilon}\right),
\end{align*}
where we have estimated the sum over $y_1$ in a similar way as the term $L$ above. Rewriting the sums over $f$ and $d$ as products, we obtain
\begin{align*}
  \sum_{d|q}\frac{\mu(d)}{d^2} \cdot \sum_{f|d} \varphi(f)\cdot  \prod_{p|f}\left(1+\left(\frac{-\alpha_2}{p}\right)\right)  
   =& \sum_{d|q}\frac{\mu(d)}{d^2}\cdot \prod_{p|d}\left(1+\varphi(p)\left(1+\left(\frac{-\alpha_2}{p}\right)\right)\right)\\
   =& \prod_{p|q}\left(1-\frac{1}{p^2}\cdot \left(1+\varphi(p)\left(1+\left(\frac{-\alpha_2}{p}\right)\right)\right)\right)\\
   =&\prod_{p|q}\left(1-\frac{1}{p}\right)\cdot \prod_{p|q}\left(1-\frac{1}{p}\cdot \left(\frac{-\alpha_2}{p}\right)\right)=:C_q.
\end{align*}
Combining everything in this section, we arrive at
\begin{equation} \label{maintermappro}
M=C_q\cdot \frac{(2N)^3}{q}+O\left(\frac{N^2}{q^{1-\varepsilon}}\right).
\end{equation}

\section{Estimation of $V_2$}
Our treatment of $V_2$, defined in \eqref{s2}, is literally the same as in \cite{GTQC}. We copy it here. First, we note that
\begin{equation} \label{V2ini}
V_2\le \frac{1}{\varphi(q)} \sum\limits_{\chi \bmod q}\bigg|\sum_{|x_1|,|x_2|\leq N} \chi\left(x_1^2+\alpha_2x_2^2\right) \bigg|^2 \cdot \max\limits_{\substack{\chi\bmod{q}\\ \chi\not=\chi_0}} \bigg| \sum_{|x_3|\leq N} \chi(x_3)\bigg|^2.
\end{equation}
Expanding the modulus square, and using orthogonality relations for Dirichlet characters, the sum over $\chi$ above transforms into 
\begin{equation} \label{double1}
\begin{split}
& \sum_{\chi\bmod q}\bigg| \sum_{|x_1|,|x_2|\leq N} \chi\left(x_1^2+\alpha_2 x_2^2\right)\bigg|^2\\
=&\sum_{\chi \bmod q}\ \sum\limits_{|x_1|,|x_2|,|y_1|,|y_2|\leq N} \chi\left(x_1^2+\alpha_2x_2^2\right)\overline{\chi}\left(y_1^2+\alpha_2y_2^2\right)\\
=&\varphi(q) \sum\limits_{\substack{|x_1|,|x_2|,|y_1|,|y_2|\leq N\\ (x_1^2+\alpha_2x_2^2,q)=1\\ (y_1^2+\alpha_2y_2^2,q)=1\\ x_1^2+\alpha_2x_2^2\equiv y_1^2+\alpha_2y_2^2 \bmod q}} 1.
\end{split}
\end{equation}
Furthermore, under the conditions $(\alpha_2,q)=1$ and $N<q/2$, we have 
\begin{equation} \label{double2}
\begin{split}
& \sum\limits_{\substack{|x_1|,|x_2|,|y_1|,|y_2|\leq N\\
x_1^2+\alpha_2x_2^2\equiv y_1^2+\alpha_2y_2^2\bmod q}}1\\
=& \sum\limits_{\substack{|x_1|,|x_2|,|y_1|,|y_2|\leq N \\ (x_1-y_1)(x_1+y_1)\equiv\alpha_2(y_2-x_2)(y_2+x_2) \bmod q}}1 \\
= &\sum\limits_{\substack{|x_1|,|x_2|,|y_1|,|y_2|\leq N\\x_1=\pm y_1\text{ and }x_2=\pm y_2}}1+\sum\limits_{\substack{0<|k_1|,|k_2|\leq 4N^2\\ k_1\equiv\alpha_2k_2 \bmod q}}\ \sum\limits_{\substack{|x_1|,|x_2|,|y_1|,|y_2|\leq N\\ (x_1-y_1)(x_1+y_1)=k_1\\(y_2-x_2)(y_2+x_2)=k_2}}1\\
\ll& N^2 + \sum_{0<|k_2|\leq 4N^2}\sum\limits_{\substack{0<|k_1|\leq 4N^2\\k_1\equiv \alpha_2k_2\bmod q}} \tau(|k_1|)\tau(|k_2|)\\
\ll & N^{2+\varepsilon}\left(1+\frac{N^2}{q}\right),
\end{split}
\end{equation}
where we use the bound $\tau(n)\ll_{\varepsilon} n^{\varepsilon}$ for the divisor function. From \eqref{double1} and \eqref{double2}, we obtain
\begin{equation} \label{double}
\sum_{\chi\bmod q}\bigg| \sum_{|x_1|,|x_2|\leq N} \chi\left(x_1^2+\alpha_2 x_2^2\right)\bigg|^2\\
\ll qN^{2+\varepsilon}\left(1+\frac{N^2}{q}\right).
\end{equation}

Applying Propositions \ref{Burgess} with $r=2$ and Proposition \ref{Linde}, we get
\begin{equation}\label{bb}
\max\limits_{\substack{\chi\bmod{q}\\ \chi\not=\chi_0}} \bigg|\sum_{|x_3|\leq N}\overline{\chi}(x_3)\bigg|^2 =\begin{cases} O\left(Nq^{3/8+\varepsilon}\right) \mbox{ unconditionally,}\\ \\
O\left(Nq^{\varepsilon}\right) \mbox{ under the Lindel\"of hypothesis.}
\end{cases}
\end{equation}
Combining \eqref{V2ini}, \eqref{double} and \eqref{bb}, we find that 
$$
V_2= \begin{cases} O\left(\left(1+N^2q^{-1}\right)N^3q^{3/8+\varepsilon}\right) \mbox{ unconditionally,}\\ \\
O\left(\left(1+N^2q^{-1}\right)N^3q^{\varepsilon}\right) \mbox{ under the Lindel\"of hypothesis.}
\end{cases}
$$   

We aim to achieve a bound of the form
\begin{equation} \label{desiredV2}
V_2\ll \Delta N^6q^{-1},
\end{equation}
where $\Delta$ is small compared to 1.  To this end, we observe that for $(u,v)\in \mathbb{R}^2$ with $u<6$ and $\Delta\in (0,1)$,
\begin{equation*}
N^uq^v\le \Delta N^6q^{-1}\Longleftrightarrow N\ge \Delta^{-1/(6-u)} q^{(v+1)/(6-u)}.
\end{equation*}
It follows that 
\begin{equation} \label{V2esti}
V_2=\begin{cases} O\left(\Delta N^6q^{-1}\right) \mbox{ if } N\ge q^{\varepsilon}\max\left\{\Delta^{-1/3}q^{11/24},\Delta^{-1}q^{3/8}\right\} \mbox{ unconditionally,}\\ \\
O\left(\Delta N^6q^{-1}\right) \mbox{ if } N\ge q^{\varepsilon}\max\left\{\Delta^{-1/3}q^{1/3},\Delta^{-1}\right\}\mbox{ under the Lindel\"of hypothesis.}
\end{cases}
\end{equation}

\section{Estimation of $V_1$}
In this section, we estimate the term $V_1$, defined in \eqref{s1}. Let $q_0:=\mbox{rad}(q)$. Denote the character sum over $x_1$ and $x_2$ on the right-hand side of \eqref{s1} by 
$$
L(q_1):=\sum\limits_{\substack{|x_1|,|x_2|\leq N\\(x_1^2+\alpha_2x_2^2,q_2)=1}}\left(\frac{x_1^2+\alpha_2x_2^2}{q_1}\right).
$$ 
When $2N\le q_1^{7/12}$, we apply \eqref{smallR} in  Proposition \ref{HeathBrown} with  
$C:=\{(x_1,x_2)\in \mathbb{R}^2 : \max\{|x_1|,|x_2|\}\le N\}$, $x_0:=0$, $R:=2N$, $\chi(x):=\left(\frac{x}{q_1}\right)\chi_2(x)$ and $Q(x_1,x_2):=x_1^2+\alpha_2x_2^2$ to bound this sum by
\begin{equation} \label{V1esti1}
L(q_1)\ll_{r}  N^{2-1/r}q_1^{(r+2)/(4r^2)}q^{\varepsilon}
\quad \mbox{ if } 
q_1^{1/4+1/(2r)} \le 2N \le q_1^{5/12+1/(2r)},
\end{equation}
where $r\ge 3$ is a suitable integer. When $2N>q_1^{7/12}$ and $q_1$ is not too small, we use \eqref{largeR} in Proposition \ref{HeathBrown} to bound this sum by 
 \begin{equation} \label{V1esti1'}
\sum\limits_{(x,y)\in C}  \chi(Q(x, y)) \ll \left(N^{5/3}q_1^{5/36}+N^2q_1^{-1/18}\right)q^{\varepsilon}.
\end{equation}
When $q_1$ is very small, we estimate $L(q_1)$ via a direct completion argument, worked out below.  

Using M\"obius inversion, we have 
\begin{equation} \label{Mobi} 
L(q_1)= \sum\limits_{d|q_2} \mu(d) \sum\limits_{\substack{|x_1|,|x_2|\leq N\\ d|(x_1^2+\alpha_2x_2^2)}}\left(\frac{x_1^2+\alpha_2x_2^2}{q_1}\right)=L^{\sharp}(q_1)+L^{\flat}(q_1),
\end{equation}
where $L^{\sharp}(q_1)$ is the contribution of $d\le N$ and $L^{\flat}(q_1)$ is the remaining contribution of $d>N$. We bound $L^{\flat}(q_1)$ by 
\begin{equation} \label{larged}
|L^{\flat}(q_1)|\le \sum\limits_{\substack{d|q_2\\ d>N}} 
\sum\limits_{|x_2|\leq N} \sum\limits_{\substack{x_1\bmod{d}\\ x_1^2\equiv -\alpha_2x_2^2\bmod{d}}}1 \ll   \sum\limits_{d|q_2} Nd^{\varepsilon}\ll Nq_2^{2\varepsilon}.
\end{equation}
To bound $L^{\sharp}(q_1)$, we divide the summations over $x_1$ and $x_2$ into residue classes modulo $q_1$ and $d$ and use the Chinese remainder theorem to obtain
\begin{equation} \label{smalld}
\begin{split}
L^{\sharp}(q_1)=& \sum\limits_{\substack{d|q_2\\ d\le N}} \mu(d)  \sum\limits_{a_1,a_2\bmod{q_1}} \left(\frac{a_1^2+\alpha_2a_2^2}{q_1}\right)
\sum\limits_{\substack{b_1,b_2\bmod{d}\\ b_1^2+\alpha_2b_2^2\equiv 0 \bmod{d}}} \
\sum\limits_{\substack{|x_1|,|x_2|\le N\\ x_1\equiv a_1\bmod{q_1}\\ x_1\equiv b_1\bmod{d}\\ x_2\equiv a_2\bmod{q_1}\\ x_2\equiv b_2\bmod{d}}} 1\\
= &  \sum\limits_{\substack{d|q_2\\ d\le N}} \mu(d)  \sum\limits_{a_1,a_2\bmod{q_1}} \left(\frac{a_1^2+\alpha_2a_2^2}{q_1}\right)
\sum\limits_{\substack{b_1,b_2\bmod{d}\\ b_1^2+\alpha_2b_2^2\equiv 0 \bmod{d}}} 
\left(\frac{2N}{q_1d}+O(1)\right)\left(\frac{2N}{q_1d}+O(1)\right)\\
= &  \left(\frac{N}{q_1}\right)^2 \Bigg(\sum\limits_{a_1,a_2\bmod{q_1}} \left(\frac{a_1^2+\alpha_2a_2^2}{q_1}\Bigg)\right)\cdot \Bigg( \sum\limits_{\substack{d|q_2\\ d\le N}} \frac{\mu(d)}{d^2} \cdot \sum\limits_{\substack{b_1,b_2\bmod{d}\\ b_1^2+\alpha_2b_2^2\equiv 0 \bmod{d}}} 1\Bigg)
+O\left(Nq_1^2(Nq_2)^{2\varepsilon}\right),
\end{split}
\end{equation}
where we use the fact that the number of solutions $(b_1,b_2)$ to the congruence $b_1^2+\alpha_2b_2^2\equiv 0 \bmod{d}$ is bounded by $d^{1+\varepsilon}$. Now it is easy to see that 
\begin{equation} \label{Tq1}
T(q_1):=\sum\limits_{a_1,a_2\bmod{q_1}} \left(\frac{a_1^2+\alpha_2a_2^2}{q_1}\right)=0,
\end{equation}
as the following calculation shows: Using the relation \eqref{relation}, we have 
  \begin{equation*}
\begin{split}
  T(q_1)= & \frac{1}{\epsilon_{q_1}\sqrt{q_1}}\cdot \sum\limits_{k=1}^{q_1}\left(\frac{k}{q_1}\right)\sum_{a_1,a_2\bmod q_1}e\left(\frac{k(a_1^2+\alpha_2a_2^2)}{q_1}\right)\\
  = & \frac{1}{\epsilon_{q_1}\sqrt{q_1}}\cdot \sum\limits_{k=1}^{q_1}\left(\frac{k}{q_1}\right)G(k,q_1)G(k\alpha_2,q_1),
\end{split}
\end{equation*}
where $G(a,c)$ is the quadratic Gauss sum, defined in \eqref{Gaussdef}. Using its evaluation in \eqref{Gaussev}, it follows that 
\begin{equation*}
\begin{split}
T_1(q_1)= \left(\frac{\alpha_2}{q_1}\right)\cdot \epsilon_{q_1}\sqrt{q_1}\cdot \sum_{k=1}^{q_1} \left(\frac{k}{q_1}\right)=0.
\end{split}
\end{equation*}
Combining \eqref{Mobi}, \eqref{larged}, \eqref{smalld} and \eqref{Tq1}, we deduce that
\begin{equation} \label{V1esti2}
L(q_1)\ll Nq_1^2q^{\varepsilon}
\end{equation}
if $N\le q$ upon redefining $\varepsilon$.

We aim to establish a bound of the form
\begin{equation} \label{Lq1target}
L(q_1)\ll \Delta^{1/2} N^2q^{-\varepsilon} 
\end{equation}
so that 
\begin{equation} \label{desiredV1}
V_1\ll \Delta N^6 q^{-1}
\end{equation}
using \eqref{s1}.  We observe that for $(u,v)\in \mathbb{R}^2$ with $u<2$ and $\Delta\in (0,1)$,
\begin{equation*}
N^uq_1^vq^{\varepsilon}\le \Delta^{1/2} N^2q^{-\varepsilon}\Longleftrightarrow N\ge \left(q^{2\varepsilon}\Delta^{-1/2}\right)^{1/(2-u)} q_1^{v/(2-u)}.
\end{equation*}
Hence, \eqref{V1esti1}, \eqref{V1esti1'} and \eqref{V1esti2} imply that
\begin{equation} \label{V1combi}
L(q_1)\ll_r \Delta^{1/2} N^2q^{-\varepsilon}\begin{cases} & \mbox{ if } \left(q^{2\varepsilon}\Delta^{-1/2}\right)^{r}q_1^{1/4+1/(2r)}\le 2N\le q_1^{5/12+1/(2r)} \\ & \mbox{ for some } r\in \mathbb{N} \mbox{ with } r\ge 3,\\ \\
&  \mbox{ if } 2N\ge q^{6\varepsilon}\Delta^{-3/2}q_1^{5/12}  \mbox{ and } q_1\ge q^{36\varepsilon}\Delta^{-9},\\ \\
& \mbox{ if } 2N\ge q^{2\varepsilon}\Delta^{-1/2}q_1^2.
\end{cases}
\end{equation}

\section{Proof of Theorem \ref{mainthm}}
In the following, we assume that $\varepsilon$ is small enough and $N\ge q^{1/3+15\varepsilon}$, and we take $\Delta:=q^{-\varepsilon}$, which will suffice to prove Theorem \ref{mainthm}. Then under the Lindel\"of hypothesis, the second bound in \eqref{V2esti} yields the desired estimate \eqref{desiredV2} for $V_2$, and the first bound in \eqref{V2esti} gives this estimate under the stronger condition $N\ge q^{11/24+2\varepsilon}$ on $N$. 

Next, we establish the desired estimate \eqref{desiredV1} for $V_1$, for which we need to prove that the bound \eqref{Lq1target} holds for all $q_1$ dividing $q_0=\mbox{rad}(q)$.  Assume first that $q_1\ge q^{100\varepsilon}$.  Under this condition, it is easily checked that the intervals
$$
\left(q^{2\varepsilon}\Delta^{-1/2}\right)^{r}q_1^{1/4+1/(2r)}\le 2N\le q_1^{5/12+1/(2r)}
$$
are overlapping and cover a range of  
$$
q^{15\varepsilon}q_1^{1/3}\le 2N\le q_1^{7/12}
$$
if $3\le r\le 6$. Also, we have 
$$
q^{6\varepsilon}\Delta^{-3/2}q_1^{5/12}\le q_1^{7/12}\quad \mbox{ and } \quad q_1\ge q^{36\varepsilon}\Delta^{-9}
$$
under the above conditions. Hence, if $q_1\ge q^{100\varepsilon}$, then using the first two estimates in \eqref{V1combi}, we see that the said bound \eqref{Lq1target} holds whenever $N\ge q^{1/3+15\varepsilon}$. 
If $q_1< q^{100\varepsilon}$, then we obtain \eqref{Lq1target} by an application of the third estimate in \eqref{V1combi} whenever $N\ge q^{1/3+15\varepsilon}$ and $\varepsilon$ is small enough. Thus, all ranges are covered and \eqref{desiredV1} holds. Now combining \eqref{Vsplit}, \eqref{desiredV2} and \eqref{desiredV1}, we have
\begin{equation} \label{Vestimate}
V\ll \Delta N^6q^{-1}.
\end{equation}

Recalling that $\Delta:=q^{-\varepsilon}$ with $\varepsilon$ small enough, it follows from \eqref{Vdefi}, \eqref{maintermappro} and \eqref{Vestimate} that
\begin{equation} \label{condi6}
\sum\limits_{\substack{\alpha_3=1\\ (\alpha_3,q)=1}}^q \left|S(\alpha_3)-C_q\cdot \frac{N^3}{q}\right|^2=\begin{cases} O\left(N^6q^{-\varepsilon-1}\right) \mbox{ if } N\ge q^{11/24+2\varepsilon} \mbox{ unconditionally,}\\ \\
O\left(N^6q^{-\varepsilon-1}\right) \mbox{ if } N\ge q^{1/3+15\varepsilon}\mbox{ under the Lindel\"of hypothesis.}
\end{cases}
\end{equation}
Now using $C_q\gg q^{-\varepsilon/8}$, we observe that if the left-hand side of \eqref{condi6} is $O\left(N^6q^{-\varepsilon-1}\right)$, then we have 
$$
S(\alpha_3)=C_q\cdot \frac{N^3}{q}\cdot \left(1+O\left(q^{-\varepsilon/4}\right)\right)
$$
for all 
$$
\alpha_3\in \{s\in \mathbb{Z} : 1\le s\le q, \ (s,q)=1\}
$$ 
with at most $O\left(\varphi(q)q^{-\varepsilon/4}\right)$ exceptions. This together with \eqref{condi6} implies the result of Theorem \ref{mainthm} upon redefining $\varepsilon$.

\section{Heuristic}
Assume that $(\alpha_1\alpha_2\alpha_3,q)=1$. In section 1, we stated the conjecture that the congruence 
\begin{equation} \label{cong}
\alpha_1x_1^2+\alpha_2x_2^2+\alpha_3x_3^2\equiv 0 \bmod{q}
\end{equation} 
should always have a solution satisfying
$(x_1x_2x_3,q)=1$ of height $\ll q^{1/2+\varepsilon}$. However, Theorem \ref{mainthm} indicates that for almost all such congruences, a much stronger bound for the smallest solution $(x_1,x_2,x_3)$ satisfying $(x_1x_2x_3,q)=1$ should hold. (The coprimality condition in Theorem \ref{mainthm} was just $(x_3,q)=1$, but with some extra efforts, a result of the same strength under the stronger condition $(x_1x_2x_3,q)=1$ should be possible to establish.) This raises the question if the said conjecture can be refined by using information on the coefficients $\alpha_1,\alpha_2,\alpha_3$.  In this section, we address this question.

As pointed out in section 1, the particular congruence $x_1^2+x_2^2+x_3^2\equiv 0\bmod{q}$ has no non-trivial solution of height less than $\sqrt{q/3}$ since in this case, this congruence turns into the equation $x_1^2+x_2^2+x_3^2=0$. Similarly, if $\alpha_1,\alpha_2,\alpha_3$ are fixed non-zero integers having the same sign, then there is no non-trivial solution of height $\ll q^{1/2-\varepsilon}$ to the congruence \eqref{cong} if $q$ is large enough. 
Naturally, one may ask under which more general conditions on the coefficients $\alpha_1,\alpha_2,\alpha_3$, a similar argument implies the non-existence of non-trivial solutions. The following considerations demonstrate that this may happen when $\alpha_1/q$, $\alpha_2/q$, $\alpha_3/q$ have good simultaneous approximation by fractions $a_1/r$, $a_2/r$, $a_3/r$ with a small denominator $r$. In the following, we make this precise. (Approximations of this form were actually {\it utilized} in \cite[section 5]{snu}.)  
       
Suppose that $r\in \mathbb{N}$ and 
$$\left|\left|\frac{r\alpha_i}{q}\right|\right|=\beta_i \quad \mbox{ for } i=1,2,3
$$
so that
$$\frac{r\alpha_i}{q}=a_i+\beta_i \quad \text{ for } i=1,2,3 \text{ and some } a_i\in \mathbb{Z}.
$$
Now multiplying the congruence \eqref{cong}
by $r$ gives 
$$r\alpha_1x_1^2+r\alpha_2x_2^2+r\alpha_3x_3^2\equiv 0\bmod{q}
$$
which is the same as 
$$
(a_1q+\beta_1q)x_1^2+(a_2q+\beta_2q)x_2^2+(a_3q+\beta_3q)x_3^2\equiv 0\bmod{q}.
$$
Reducing the left-hand side modulo $q$ gives
\begin{equation} \label{newcong}
\beta_1qx_1^2+\beta_2qx_2^2+\beta_3qx_3^2\equiv 0\bmod q, 
\end{equation}
where $\beta_iq\in \mathbb{Z}$ for $i=1,2,3$. If $r\not\equiv 0 \bmod{q}$, then $\beta_i\not=0$ since $(\alpha_i,q)=1$ for $i=1,2,3$.  Suppose that $|x_1|,|x_2|,|x_3|\leq N$ and
$$\max\{|\beta_1|,|\beta_2|,|\beta_3|\}<\frac{1}{3N^2}.
$$
Then the above congruence \eqref{newcong} turns into an equation
$$
\beta_1qx_1^2+\beta_2qx^2+\beta_3qx_3^2=0.
$$
If $\beta_1,\beta_2,\beta_3$ have the same sign, then this equation does not have any non-trivial solution. In order to expect the existence of a non-trivial solution for large enough $q$, the least we should demand is that
\begin{equation} \label{demand}
\max\{|\beta_1|,|\beta_2|,|\beta_3|\}\geq \frac{q^{2\varepsilon}}{N^2}
\end{equation}
for $\varepsilon>0$ arbitrary but fixed and every choice of $r\not\equiv 0 \bmod{q}$.  
The above inequality \eqref{demand} is equivalent to
\begin{equation}\label{*}
N\geq q^{\varepsilon}\min\{|\beta_1|^{-1/2},|\beta_2|^{-1/2},|\beta_3|^{-1/2}\}.
\end{equation}
We conjecture that the validity of \eqref{*} for all $r\not\equiv 0\bmod{q}$ together with the condition $N\geq q^{1/3+\varepsilon}$ from the conditional part of Theorem \ref{mainthm} suffices to ensure the existence of a solution satisfying $(x_1x_2x_3,q)=1$ to the congruence \eqref{cong} of height at most $N$. This gives rise to Conjecture \ref{conj}.

We note that in the case when $\alpha_1=1$, the inequality \eqref{Nineq} for the height is equivalent to 
$$
\ll q^{\varepsilon}\max\left\{q^{1/3},\max_{1\leq r< q^{1/3}}\min\left\{(q/r)^{1/2},\left|\left|r\alpha_2/q\right|\right|^{-1/2},\left|\left|r\alpha_3/q\right|\right|^{-1/2}\right\}\right\}
$$
and use this to check the consistency of Conjecture \ref{conj} with Theorem \ref{mainthm}. To this end, it suffices to establish that 
\begin{equation} \label{es}
\max_{1\le r<q^{1/3}}\left|\left|r\alpha_3/q\right|\right|^{-1/2}>q^{1/3}
\end{equation}
for at most $o(\varphi(q))$ integers $\alpha_3\in \{1,...,q\}$. Given $r$, we have
\begin{equation}\label{**}
\left|\left|r\alpha_3/q\right|\right|^{-1/2}>q^{1/3}
\end{equation}
if $r\alpha_3\equiv b\bmod{q}$ with $|b|<q/2$ and $\left(q/|b|\right)^{1/2}>q^{1/3}$, which is equivalent to $|b|<q^{1/3}$. Moreover, given $r$ and $b$, there are at most $(r,q)$ solutions $\alpha_3$ to the congruence $r\alpha_3\equiv b\bmod{q}$. Thus for every $r$, there are at most $O((r,q)q^{1/3})$  integers $\alpha_3\in \{1,...,q\}$ such that \eqref{**} holds. Hence, there are at most 
$$
\ll \sum\limits_{1\le r< q^{1/3}} (r,q)q^{1/3}\ll q^{2/3+\varepsilon}
$$ 
integers $\alpha_3\in \{1,...,q\}$ such that \eqref{es} holds. This verifies the claim.

\section{Appendix: Proof of Proposition \ref{HeathBrown}} \label{heu}
We slightly modify the arguments in \cite[section 4]{HB}, where the same result was proved for primitive characters. 
Here we just indicate the required changes but refer the reader to \cite{HB} for computational details. 

Assume that $\chi$ is a non-principal character to a squarefree modulus $q_0$. Let $q_1>1$ be the conductor of $\chi$ and assume that $q_0=q_1q_2$. Then $\chi=\chi_1\chi_2$, where $\chi_1$ is a primitive character modulo $q_1$, and $\chi_2$ is the principal character modulo $q_2$. Using M\"obius inversion, it follows that
\begin{align} \label{view}
 \sum_{(x_1,x_2)\in C}\chi(Q(x_1,x_2))=\sum\limits_{\substack{(x_1,x_2)\in C\\(Q(x_1,x_2),q_2)=1}}\chi_1(Q(x_1,x_2))
 =\sum_{d|q_2}\mu(d) \Sigma_d,
\end{align}
where 
\begin{equation} \label{sigmad}
\Sigma_d:=\sum\limits_{\substack{(x_1,x_2)\in C\\d|Q(x_1,x_2)}}\chi_1(Q(x_1,x_2)).
\end{equation}
The remaining task is to estimate $\Sigma_d$.  To this end, we proceed similarly as in \cite[section 4]{HB}, with the only differences that now an additional condition $d|Q(x_1,x_2)$ is included, and $q$ is replaced by $q_1$ and $\chi$ by $\chi_1$.   

Let $N\in \mathbb{N}$ be a parameter satisfying $N\leq Rq_1^{-1/100}$, to be fixed later. Set $S:=[R/N]$. (Note that we used the symbol $N$ previously with a different meaning.) The method starts with specifying a set of "good" vectors ${\bf s}\in \mathbb{N}^2$. For their definition, we refer the reader to \cite[section 4]{HB}. 
All we use here is that these "good" vectors form a set $\mathcal{S}$ whose cardinality satisfies the lower bound
\begin{equation}\label{SP!}
\sharp \mathcal{S} \gg_{\varepsilon}S^2q_1^{-\varepsilon}\quad \mbox{ if } S\gg q_1^{\varepsilon},
\end{equation}
by a result in \cite{BrHe}, and that $\mathcal{S}$ is constructed in such a way that the inequality
\begin{equation}\label{new}
\Sigma_{d}\ll N^{-1}S^{-2}q_1^{\varepsilon}\sum\limits_{\mathbf{s}\in\mathcal{S}}\sum\limits_{\substack{\mathbf{x}\in \mathbb{Z}^2\\||\mathbf{x-x_0}||_2\leq 2R}}\max_{I\subseteq (0,N]}\Bigg|\sum_{\substack{n\in I\\ d|Q(x_1+ns_1,x_2+ns_2)}} \chi_1(Q(x_1+ns_1,x_2+ns_2))\Bigg|
\end{equation}
holds. (Inequality \eqref{SP!} above is the same as \cite[(4.2)]{HB} with $q$ replaced by $q_1$, and inequality \eqref{new} above is  \cite[inequality for $\Sigma$ before (4.3)]{HB} with $q,\chi$ replaced by $q_1,\chi_1$ and an additional summation condition $d|Q(x_1+ns_1,x_2+ns_2)$ included. This comes from the additional summation condition $d|Q(x_1,x_2)$ in the definition of $\Sigma_d$ in \eqref{sigmad}.)

Proceeding similarly as in \cite[section 4]{HB}, we continue with splitting the quadratic form on the right-hand side of \eqref{new} into
$$
Q(x_1+ns_1,x_2+ns_2)=Q({\bf s})\widetilde{Q}(n+a(\mathbf{s,x}),b(\mathbf{s,x})),
$$
where $\widetilde{Q}(X,Y)$ is a suitable quadratic form and $a(\mathbf{s,x})$, $b(\mathbf{s,x})$ are certain integers depening on ${\bf s}$ and ${\bf x}$ (again, for the details see \cite[section 4]{HB}). Now if $(Q({\bf s}),d)=e$, then $d|Q(x_1+ns_1,x_2+ns_2)$ is equivalent to $f|\widetilde{Q}(n+a(\mathbf{s,x}),b(\mathbf{s,x}))$, where $d=ef$. Hence,
writing 
$$
N(a,b):=\sharp\left\{(\mathbf{s,x})\in\mathcal{S}\times\mathbb{Z}^2: ||\mathbf{x}-\mathbf{x}_0||_2\leq 2R,\ a(\mathbf{s,x})=a,\ b(\mathbf{s,x})=b\right\},
$$
it follows that 
\begin{equation}\label{sumd}
\Sigma_{d}\ll\sum_{f|d}N^{-1}S^{-2}q_1^{\varepsilon}\sum_{a,b \bmod{q_1}}N(a,b)\max_{I\subseteq (0,N]}\Bigg|\sum\limits_{\substack{n\in I\\f|\widetilde{Q}(n+a,b)}}\chi(\widetilde{Q}(n+a,b))\Bigg|.
\end{equation}
Here we have dropped the condition $(Q(\mathbf{s}),d)=e$ while defining $N(a,b)$, which is tolerable. 

Proceeding along the lines in \cite[section 4]{HB}, we now use estimates for the first and second moments of $N(a,b)$, remove the maximum on the right-hand side of \eqref{sumd} and use H\"older's inequality (see the relevant parts of \cite{HB} for the details). This reduces the problem to estimating the $2r$-th moments
$$
S(q_1,H):=\sum_{a,b\bmod{q_1}}\Bigg|\sum\limits_{\substack{n\leq H\\f|\widetilde{Q}(n+a,b)}}\chi_1(\widetilde{Q}(n+a,b))\Bigg|^{2r}.
$$
In contrast to \cite[section 4]{HB}, we here need to handle an additional divisor condition $f|\widetilde{Q}(n+a,b)$. We recall that $f|q_2$ and $(q_1,q_2)=1$, so $(q_1,f)=1$. Now we extend the outer summation over $a,b$ to all residue classes modulo $q_1f$ and write $a\equiv a_1f+a_2q_1\bmod{q_1f}$ and 
$b\equiv b_1f+b_2q_1 \bmod{q_1f}$, where $a_1,b_1$ run over all residue classes modulo $q_1$, and $a_2,b_2$ run over all residue classes modulo $f$. In this way, we obtain
\begin{align*}
S(q_1,H)\le & \sum\limits_{\substack{a_1\bmod q_1\\ b_1\bmod q_1}} \sum\limits_{\substack{a_2 \bmod f\\b_2\bmod f}}\Bigg|\sum\limits_{\substack{n\leq H\\f|\widetilde{Q}(n+a_2q_1,b_2q_1)}}\chi_1(\widetilde{Q}(n+a_1f,b_1f)\Bigg|^{2r}\\
=&\sum\limits_{\substack{c_2,c_3,d_2\bmod{f}\\ \widetilde{Q}(c_3,d_2)\equiv 0\bmod{f}}}
\sum\limits_{c_1,d_1 \bmod{q_1}}\Bigg|\sum\limits_{\substack{n\leq H\\n\equiv c_3-c_2\bmod{f}}}\chi_1(\widetilde{Q}(n+c_1,d_1))\Bigg|^{2r}
\end{align*}
via the changes of variables $c_1\equiv a_1f\bmod{q_1}$, $d_1\equiv b_1f\bmod{q_1}$, $c_2\equiv a_2q_1\bmod{f}$, $c_3\equiv n+c_2\mod f$, $d_2\equiv b_2q_1\bmod{f}$. 
Upon taking $c=c_3-c_2$, it follows that 
$$
S(q_1,H)\ll f^{2+\varepsilon} \max_{c\bmod{f}} \sum\limits_{c_1,d_1 \bmod{q_1}}\Bigg|\sum\limits_{\substack{n\leq H\\n\equiv c\bmod{f}}}\chi_1(\widetilde{Q}(n+c_1,d_1))\Bigg|^{2r}
$$
since the number of solutions $(c_3,d_2)$ to the congruence $\widetilde{Q}(c_3,d_2)\equiv 0\bmod{f}$ is $\ll f^{1+\varepsilon}$. Writing $n=fk+c$, the above implies that
$$
S(q_1,H)\ll f^{2+\varepsilon} \max\limits_{x\in \mathbb{R}} \sum\limits_{u,v \bmod{q_1}}\Bigg|\sum\limits_{k\in (x,x+H/f]} \chi_1(\widetilde{Q}(fk+u,v))\Bigg|^{2r}. 
$$
The sum on the right-hand side can be handled by the same technique as the sum
$$
 \sum\limits_{a,b \bmod{q}}\Bigg|\sum\limits_{n\le H} \chi(\widetilde{Q}(n+a,b))\Bigg|^{2r}
$$ 
in \cite[section 4]{HB} using the Riemann hypothesis for curves over finite fields, where it is important to note that $f$ is coprime to the modulus $q_1$ of $\chi_1$. In this way, we obtain the estimate
$$
S(q_1,H) \ll_r f^2(fq_1H)^{\varepsilon}\left(q_1\left(\frac{H}{f}\right)^{2r}+q_1^2\left(\frac{H}{f}\right)^r\right)
$$
similarly as in  \cite[Lemma 9]{HB}. Since $r\ge 3$, this implies the estimate
$$
S(q_1,H)\ll_r (qH)^{\varepsilon}\left(q_1H^{2r}+q_1^2H^r\right). 
$$
Now, along the same lines as in \cite[section 4]{HB}, we obtain the estimate
$$
\Sigma_{d}^{2r}\ll_r N^{2-2r}R^{4r-4}\left(1+R^2N^{-1}q_1^{-1/2}+R^4N^{-2}q_1^{-4/3}\right)(q_1Nd)^{\varepsilon}\left(q_1N^{2r} +q_1^2N^r\right),
$$
where the divisor sum over $f$ in \eqref{sumd} creates the extra factor $d^{\varepsilon}$.  Similarly as in \cite[end of section 4]{HB}, in order to balance the final two terms, we choose $N:= \left[q_1^{1/r}\right]$, which satisfies
our constraint $N \le Rq_1^{-1/100}$ provided that $R\ge q_1^{1/4+1/(2r)}$ and $r \ge 3$. On
redefining $\varepsilon$,  it follows that
\begin{equation*}
\begin{split}
\Sigma_d^{2r} \ll_r & (q_1d)^{\varepsilon} N^{2-2r}R^{4r-4}\left(1 + R^2N^{-1}q_1^{-1/2} + R^4N^{-2}q_1^{-4/3}\right)q_1N^{2r}\\
\ll_r & (q_1d)^{\varepsilon} q_1^{1/2+1/r}R^{4r-2}\left(R^{-2}q^{1/2+1/r} + 1 + R^2q_1^{-5/6-1/r}\right).
\end{split}
\end{equation*}
Now Proposition \ref{HeathBrown} follows using \eqref{view}, where the estimate \eqref{largeR} arrives on choosing 
$r=3$.

\end{document}